\documentclass[runningheads]{llncs}

\usepackage{ifthen,
amsmath,
amssymb,latexsym,color,epsfig,
bm,verbatim,
}

\usepackage{hyperref}
\usepackage{newtype}




\def\rlcr{\mathop{\overline{\rm lcr}}\nolimits}

\def\itemi{\item [$(i)$]}
\def\itemii{\item [$(ii)$]}

\def\itemiii{\item [$(iii)$]}

\def\RAC{\ensuremath{\mathop{\rm RAC}\nolimits}}

\newcommand{\Tvx}{\ensuremath{\top}}
\newcommand{\Pvx}{\ensuremath{\times}}

\newcommand{\RN}{\mathbb{R}}

\newtype{\class}{\mathbf}
\class{P}
\class{NP}
\class{AM}
\class{LOGNP}
\class{coNP}
\class{NTIME}
\class{PSPACE}
\class{EXP}
\class{EXPSPACE}
\class{NEXP}
\class{RP}
\class{coRP}
\class{TA}
\class{RA}
\class{PP}
\class{PH}
\class{CH}
\class{FIXP}
\class[FIXPd]{FIXP_d}
\class[sharpP]{\#P}

\newcommand{\NPR}{\ensuremath{\bm{\exists \RN}}}




\begin{document}

\title{\RAC-Drawability is $\exists \mathbb{R}$-complete}

\author{Marcus Schaefer
\orcidID{0000-0001-7005-8599} }

\authorrunning{M. Schaefer}
%
\institute{DePaul University, Chicago, IL 60604, USA
\email{mschaefer@cdm.depaul.edu}\\
\url{http://ovid.cs.depaul.edu}
}

\maketitle

\begin{abstract}
 A {\em \RAC-drawing} of a graph is a straight-line drawing in which every crossing occurs at a right-angle. We show that deciding whether a graph has a \RAC-drawing is as hard as the existential theory of the reals, even if we know that every edge is involved in at most ten crossings and even if the drawing is specified up to isomorphism.
\end{abstract}

\keywords{\RAC-drawing \and right-angle drawing \and straight-line drawing \and
existential theory of the reals \and computational complexity}

{\bfseries\noindent AMS Subject Classification (2020):} 68Q17, 68R10, 05C10, 05C62

\section{Introduction}

It is generally admitted that if we cannot avoid crossings in drawings, then it is advantageous to draw the crossings with large angles. This simplifies reading the drawing as edges become easier to follow individually. In a 2009 paper, Didimo, Eades and Liotta~\cite{DEL11} formalized this idea by introducing \RAC-drawings of graphs, in which only the largest possible angle, the right-angle, is allowed. In other words, a drawing of a graph is a {\em \RAC-drawing} if all crossings in the drawing occur at right angles.

\RAC-drawings have become an increasingly popular subject in the graph drawing literature, see, for example, a recent survey by Didimo~\cite{D20} summarizing our knowledge. We are specifically interested in the computational complexity of the recognition problem.

Argyriou, Bekos, and Symvonis~\cite{ABS12} showed early on that it is \NP-hard to recognize whether a graph has a \RAC-drawing. Why \NP-hard, and not
\NP-complete? The issue is that a realization of a \RAC-drawing may require
real coordinates, and, a priori, we do not have any bounds on the precision and we do not
even know whether the graph can be realized on a grid. Bieker~\cite{B20} showed that the problem lies in \NPR, the complexity class associated with deciding the truth of the existential theory of the reals.\footnote{For a thorough introduction to the existential theory of the reals, check out~\cite{M14}. For a quick intro, the
Wikipedia page~\cite{WETR21} will serve.} The exact complexity remained open (as mentioned, for example, in~\cite[p. 4:11/12]{DLM19}). 

Also open, not even known to be \NP-hard, was the complexity of the {\em fixed embedding} variant of \RAC-drawability, in which we are given a drawing of the graph and have to decide whether the graph has a \RAC-drawing isomorphic to the given drawing.

\begin{theorem}\label{thm:RACNPR}
  Testing whether a graph (with or without fixed embedding) has a \RAC-drawing is \NPR-complete, even if each edge has at most ten crossings.
\end{theorem}

\NPR-hardness implies \NP-hardness~\cite{S91}, so the fixed embedding variant is \NP-hard. What does \NPR-hardness add that \NP-hardness does not give us? Perhaps nothing, since
it is possible (if considered unlikely) that $\NP = \NPR$. Nevertheless, our \NPR-hardness
reduction shows that \RAC-drawings can require arbitrarily complex algebraic
integers in any realization. And even \RAC-drawings that can be realized on a grid,
may require double-exponential precision. We discuss these results on area and precision in Section~\ref{sec:PaA}.

\subsection{More on \RAC-Drawings}

As far as we know the double-exponential lower bound on the area of a
RAC-drawing is new, but there have been (single) exponential lower bounds in
constrained settings, e.g. for upward \RAC-drawings~\cite{ACDFKS11},
RAC-drawings in which a given horizontal order of the vertices must be realized, and
for $1$-plane \RAC-drawings~\cite{BDEKLM16}, drawings in the plane with at most one crossing
per edge.

If we allow bends along edges, the situation changes dramatically. A
$\RAC_k$ drawing of a graph is a \RAC-drawing in which every edge
has at most $k$ bends. \RAC-drawings are just the $\RAC_0$-drawings.

Every graph has a $\RAC_3$-drawing~\cite{DEL11}, but not necessarily
a $\RAC_2$-drawing: any graph with a $\RAC_2$-drawing has at most linearly many edges~\cite{AFKMT12}. 
The complexity of recognizing graphs with $\RAC_1$- and $\RAC_2$-drawings remains tantalizingly open~\cite[Problem 6]{DLM19}.

There are polynomial upper bounds on the area of $\RAC_k$ drawings for $k \geq 3$~\cite{FK20}, but no bounds seem to be known for $k = 1, 2$. \NPR-hardness of these cases would likely imply
double-exponential lower bounds.

\subsection{Overview}

Our proof that the \RAC-drawability problem is \NPR-complete consists of a sequence of reductions. There is no convenient graph drawing problem to start with, so in Section~\ref{sec:ETR} we present an \NPR-complete algebraic problem. 

The main idea then is to enrich the \RAC-drawing model with a special feature: vertices with a specific type of angle constraints. The proof of Theorem~\ref{thm:RACNPR} then breaks into two major parts. The first part shows that this additional feature leads to an \NPR-complete problem (even for crossing-free drawings), since it is powerful enough to encode the algebraic problem, see Section~\ref{sec:RACAC}. The second part shows how to simulate the special feature within the \RAC-model. This second part had to be relegated to an appendix, Section~\ref{sec:RACNPR}, to meet the length constraints.
In Section~\ref{sec:PaA} we discuss issues of precision, area, and universality, before closing with a short section of open questions.

\section{The Existential Theory of the Reals}\label{sec:ETR}

The {\em existential theory of the reals} is the set of existentially quantified, true statements over the real numbers. The corresponding complexity class is \NPR. (Defined just like \NP\ is defined from Boolean formula satisfiability, though there also is a machine model~\cite{EHM20}.) A problem is {\em \NPR-hard} if every
problem in \NPR\ reduces to it; it is {\em \NPR-complete}, if it is \NPR-hard and lies in \NPR.

\NPR\ captures the complexity of many natural problems in graph drawing, particularly if real coordinates are involved. Bieker's thesis~\cite{B20} surveys many of the relevant graph drawing results.
Some recent problems shown \NPR-complete relevant to graph drawing include: visibility graphs of triangulated irregular networks~\cite{BOZ21}, the local rectilinear crossing number~\cite{S21}, simultaneous geometric embeddings of paths~\cite{S21b}, and covering polygons by triangles~\cite{A21}.\footnote{These results are from 2021. The Wikipedia page mentioned earlier~\cite{WETR21} is host to a growing list of complete problems, many from the areas of graph drawing and computational geometry.}

For our reduction we will be working with an \NPR-complete problem, which, as is often the case, is tailor-made for the situation we find ourselves in. The proof of the theorem can be found in Section~\ref{sec:ETReqs}; it combines ideas from Mn\"{e}v~\cite{M88}, Shor~\cite{S91}, and Richter-Gebert~\cite{RG95}.

\begin{theorem}\label{thm:ETReqs}
   The following problem is \NPR-complete: Given equations of the form
   $x_i = 2$, $x_i = x_j$, $x_i = x_j+x_k$, and 
   $x_i = x_j \cdot x_k$ for 
   variables $x_1, \ldots, x_n$,
   decide whether the equations have a solution with $x_i > 1$ for all $i \in [n]$.
\end{theorem}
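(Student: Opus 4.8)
The plan is to get \NPR-membership for free — an instance is nothing but a sentence of the existential theory of the reals, namely a conjunction of polynomial equations of degree at most two together with the side conditions $x_1 > 1, \dots, x_n > 1$, so it lies in \NPR — and to prove \NPR-hardness by reducing \ETR\ to the problem in two stages.

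\emph{Stage 1 (arithmetic normal form).} First I would reduce general \ETR\ to the special case of deciding whether a conjunction of equations of the four forms $y = 2$, $y = z$, $y = z + w$, $y = z\cdot w$ has a solution over all of $\RN$, with no sign or range constraint. This is the routine part, done with the usual gadgets: push negations onto atoms; replace $P \neq 0$ by $Pu = 1$, $P \geq 0$ by $P = u^2$, and $P > 0$ by $Pu^2 = 1$ (a fresh variable $u$ each time); collapse a disjunction of polynomial equations to a product and a conjunction to a sum of squares, working bottom-up until a single polynomial equation remains; then expand that polynomial by a straight-line program with one fresh variable per intermediate sum or product, so that the only constant required is $2$ (freely interchangeable with $1$), realizing subtraction by reading an addition equation backwards, and assert that the output is $0$ — e.g.\ by the equation $v = v + v$. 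Repeated indices in the ternary forms are harmless. That so meagre a fragment of arithmetic already captures all of \ETR\ is precisely the Mn\"{e}v--Shor--Richter-Gebert universality phenomenon (alternatively this stage may be routed through stretchability of pseudoline arrangements and von Staudt's projective constructions of $+$ and $\cdot$); carrying it out cleanly, while keeping every polynomial of arity at most three, is the one genuinely delicate point of the whole proof.

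\emph{Stage 2 (forcing $x_i > 1$).} The key observation is that every real number is a difference of two reals that are both larger than $1$, i.e.\ $\{\,a - b : a > 1,\ b > 1\,\} = \RN$. So I would represent each variable $u$ of the normal-form instance by a fresh pair $(a_u, b_u)$ with the intended reading $u = a_u - b_u$, keep one global variable $\tau$ with the equation $\tau = 2$, and rewrite each normal-form equation over the $a$'s and $b$'s with all subtractions cleared: $u = 2$ becomes $a_u = b_u + \tau$; $u = v + w$ becomes the balance $a_u + b_v + b_w = b_u + a_v + a_w$; and $u = v\cdot w$ becomes $a_u + a_v b_w + b_v a_w = b_u + a_v a_w + b_v b_w$. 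Each such balance is realized by introducing fresh variables for the required partial products and partial sums and then equating the two topmost variables by an equation of the form $y = z$; crucially, every variable so introduced is a product or a sum of quantities each already required to be larger than $1$, hence automatically larger than $1$, and the only places where $>1$ is demanded outright are the $a_u$ and $b_u$ themselves. Finally one checks that the recipe $b_u := |u| + 2$, $a_u := u + |u| + 2$ (applied to every variable $u$) turns any real solution of the normal-form instance into a solution of the constructed instance with all variables $> 1$, while conversely $u := a_u - b_u$ turns any real solution of the constructed equations into a solution of the normal-form instance. Hence the constructed instance has a solution with all variables $> 1$ if and only if the normal-form instance is satisfiable over $\RN$; chaining the two stages reduces \ETR\ to our problem, which — together with the membership noted above — is therefore \NPR-complete.

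\emph{Main obstacle.} Essentially all the weight rests on Stage 1: the universality machinery of Mn\"{e}v, Shor, and Richter-Gebert, and within it the care needed to keep the final system ternary with a single constant. Stage 2 is then bookkeeping, the points to watch being that each auxiliary variable is manifestly larger than $1$ and that every rewrite stays within the four permitted equation forms, including the handling of coincident indices.
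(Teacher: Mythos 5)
Your proposal is correct and follows essentially the same route as the paper: both rest on writing every real as a difference of two reals greater than $1$, on the fact that $(1,\infty)$ is closed under addition and multiplication (so all intermediate variables are automatically $>1$), on building the constants from $2$, and on a final comparison of the form $x_i=x_j$. The only cosmetic difference is ordering --- the paper starts directly from the known \NPR-completeness of deciding whether a single integer polynomial has a real zero, performs the difference substitution on that one polynomial, and separates positive from negative monomials into $g=h$, whereas you first normalize to a ternary system and then clear subtractions equation by equation as balances; both bookkeeping schemes work.
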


As we mentioned, $\NP \subseteq \NPR$~\cite{S91}, in particular, \NPR-hard problems are also \NP-hard. On the other hand, $\NPR \subseteq \PSPACE$~\cite{C88}, so \NPR-complete problems are solvable in polynomial space (and, therefore, exponential time). 

\section{\RAC-Drawings with Angle Constraints}\label{sec:RACAC}

We introduce two types of special vertices that come with angle and rotation constraints. Recall that the rotation at a vertex in a drawing is the clockwise permutation of edges incident to the vertex.

\begin{itemize}
    \item a {\em \Tvx-junction} is a vertex $v$ which is incident to three special edges $e_1, e_2, e_3$ ($v$ may be incident to additional edges). In a straight-line drawing we require that the rotation of the special edges at $v$ is $e_1e_2e_3$, {\em or the reverse}, and there are right angles between $e_1$ and $e_2$ and $e_2$ and $e_3$ at $v$; additional edges at $v$ can occur, at any angle, between $e_1$ and $e_3$ (opposite of $e_2$),
    \item a {\em \Pvx-junction} is a vertex $v$ which is incident to four special edges $e_1, e_2, e_3, e_4$ ($v$ may be incident to additional edges). In a straight-line drawing we require that the rotation of the special edges at $v$ is $e_1e_2e_3e_4$, {\em or the reverse}, and that there are right angles between $e_i$ and $e_{i+1}$, for $1 \leq i \leq 3$; additional edges can occur, at any angle, inside one of the quadrants, e.g. between $e_3$ and $e_4$.
\end{itemize}

Figure~\ref{fig:TvxandPvx} shows these junctions, and how we symbolize
them in drawings.

 \begin{figure}[htb]\centering
 \includegraphics[width=5in]{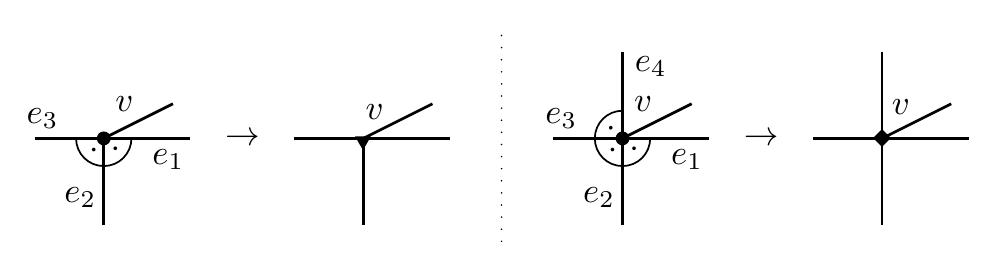}
 \caption{\Tvx- and \Pvx-junctions, and how we draw them in graphs
 using a $\blacktriangledown$ and a $\blacklozenge$. Each of the junctions is shown with one additional edge.}\label{fig:TvxandPvx}
 \end{figure}

 If special edge $e_2$ in a \Tvx-junction ends in a leaf, we can think of the junction as a straight-line, $e_3e_1$, with a vertex on it.

 Two drawings of a graph (with or without junctions) are {\em isomorphic} if there is a  homeomorphism of the plane (which may be orientation-reversing) that maps the graphs to each other.

\begin{theorem}\label{thm:TPvxNPR}
  Testing whether a graph $G$ with \Tvx- and \Pvx-junctions has a \RAC-drawing, is \NPR-complete.
  This remains true even if we are given a crossing-free drawing of $G$ and $G$ either has no
  \RAC-drawing, or it has a crossing-free \RAC-drawing which is isomorphic to the given drawing.
\end{theorem}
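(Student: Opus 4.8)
The plan is to reduce from the algebraic problem of Theorem~\ref{thm:ETReqs}, so we must build, for a given system of equations of the forms $x_i=2$, $x_i=x_j$, $x_i=x_j+x_k$, $x_i=x_j\cdot x_k$, a graph $G$ with \Tvx- and \Pvx-junctions together with a crossing-free reference drawing, such that $G$ has a \RAC-drawing if and only if the system has a solution with every $x_i>1$, and moreover any \RAC-drawing of $G$ is (up to the isomorphism notion defined above) the reference drawing. The first and conceptually cleanest step is to fix a coordinate frame inside the drawing: I would install a long horizontal ``baseline'' edge realized as a straight segment by chaining \Tvx-junctions whose middle special edge $e_2$ is a leaf (the remark after Figure~\ref{fig:TvxandPvx} says such a junction behaves like a point on a straight line), and anchor two special points on it a fixed distance apart; the \Pvx-junctions then give us a way to erect perpendiculars to this baseline at chosen points. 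A real value $x_i$ will be encoded as the $x$-coordinate (or a length along the baseline) of a designated vertex, with the constraint $x_i>1$ enforced by geometric betweenness in the reference drawing — the vertex must lie in the open region to one side of the ``$1$'' anchor, which an isomorphic drawing must preserve.

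The core of the construction is a set of \emph{gadgets} realizing the four equation types. The gadget for $x_i=2$ just ties a coordinate vertex to a ruler mark built from the two baseline anchors (two unit steps). For $x_i=x_j$ I would use a \Pvx-junction to transfer a length from one perpendicular to another, or simply route two coordinate vertices to a common pivot forcing equal horizontal displacement. Addition $x_i=x_j+x_k$ is obtained by placing segments of length $x_j$ and $x_k$ end to end along a line parallel to the baseline, again using perpendiculars dropped via \Pvx-junctions to read coordinates off the baseline; the collinearity and perpendicularity that \Tvx/\Pvx-junctions force do the arithmetic. Multiplication $x_i=x_j\cdot x_k$ is the classical similar-triangles (intercept theorem) construction: with perpendicular legs of lengths $1$ and $x_j$ meeting at a right angle, a line through appropriate points cuts off a segment of length $x_j\cdot x_k$ on a parallel — every angle needed here is either a right angle (available from \Pvx-junctions) or is pinned down by forcing collinearity through a \Tvx-junction with a leaf middle edge. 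Because all of our constraints are ``this vertex lies on this line'' and ``this angle is a right angle,'' and the junctions supply exactly right angles and (via leaves) straight lines, a \RAC-drawing of the assembled graph is forced to realize a solution, and conversely any solution with all $x_i>1$ can be laid out with the gadgets nested so that no two gadgets interfere — this is where the $x_i>1$ hypothesis earns its keep, since it keeps all the intercept-theorem triangles nondegenerate and lets us choose a consistent global scale. Since we will also produce the reference drawing from a fixed ``all variables equal to some large constant'' witness layout, and every constraint in the construction is invariant under plane homeomorphism in the sense of the isomorphism definition, the ``with fixed embedding'' strengthening comes essentially for free.

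Membership of the problem in \NPR\ is routine: a \RAC-drawing is specified by the real coordinates of all vertices, and all the requirements — straight-line edges, right angles at crossings, the prescribed rotations at \Tvx- and \Pvx-junctions, the right-angle conditions there, and (in the fixed-embedding version) isomorphism with the given drawing — are expressible by a polynomial-size existential first-order formula over the reals (perpendicularity is a dot-product equation; rotation and isomorphism are orientation/sign conditions on determinants). \NPR-hardness then follows from the reduction together with Theorem~\ref{thm:ETReqs}. The main obstacle I anticipate is not any single gadget but the \emph{global} noninterference and rigidity argument: I must route all the perpendiculars, baselines, and triangle legs so that the intended drawing is feasible without forced crossings (or with all crossings at right angles, if I allow them), while simultaneously proving that the junction constraints leave no geometric slack — that is, that the only way to satisfy all the right-angle and collinearity constraints is the intended one up to the permitted homeomorphism, so that no spurious \RAC-drawings sneak in and no valid algebraic solution is excluded. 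Getting this rigidity without blowing up the number of crossings per edge (the excerpt's Theorem~\ref{thm:RACNPR} wants at most ten, though here junctions are still abstract) is the delicate part, and I expect it to consume the bulk of the proof.
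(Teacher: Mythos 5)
Your overall architecture---reduce from Theorem~\ref{thm:ETReqs}, encode reals as distances of marked points, build a gadget per equation type, then handle global routing---matches the paper's. But there is a concrete gap that the paper's gadgets are specifically designed to avoid: the order of points along a line is combinatorial data, determining which edges must cross which and hence the single crossing-free reference drawing you must output \emph{before} knowing the solution. If you place $x_j$ and $x_k$ (both $>1$) on a common line relative to a shared $0$ and $1$, as your ``end to end'' addition and your intercept-theorem multiplication both do, the order of the marked points---and with it the crossing pattern---depends on whether $x_j<x_k$ or $x_j>x_k$, so no single drawing is realized by every solution; this breaks the promise/fixed-embedding strengthening and muddies even the plain reduction. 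The paper engineers around exactly this: addition first \emph{negates} one operand (so the operands sit on opposite sides of $0$ in a known order), and multiplication routes one operand through a \emph{reciprocal} (so it lies in $(0,1)$ while the other lies in $(1,\infty)$), after which the product is obtained by relabeling which point plays the role of ``$1$'' rather than by an intercept-theorem transversal.

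A second, related gap is that you never identify the mechanism that makes length transfer rigid: one edge crossing a whole family of edges, necessarily all at right angles, forces that family to be mutually parallel, which preserves the relative distances (hence the represented numbers) between the two lines the family connects. This single trick drives the paper's copy, equality, angling, addition, and multiplication gadgets. A \Pvx-junction gives you a right angle \emph{at a vertex}, but it does not force two transfer edges erected on two different baseline points to be parallel, so your ``perpendiculars dropped via \Pvx-junctions'' do not pin down the transported lengths as described. Finally, the crossing-free reference drawing does not come ``for free'': the paper obtains it by placing gadgets along a line, connecting them with chains of angling gadgets at solution-independent offsets, and converting every predictable orthogonal crossing into a \Pvx-junction---which is only possible because, thanks to the negation/reciprocal normalization, the crossing pattern depends only on the equations and not on the solution.
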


We will later see that \Pvx-junctions can be simulated by \Tvx-junctions, so they are not, strictly speaking, necessary, but they do simplify the constructions.

For the promise version note that \Tvx- and \Pvx-junctions are vertices (and not crossings), and that a \RAC-drawing does not have to contain any crossings. The theorem implies that testing whether a graph with \Tvx- and \Pvx-junctions has a crossing-free \RAC-drawing is also \NPR-complete, but we need the stronger promise version for the main proof.

We prepare the proof of Theorem~\ref{thm:TPvxNPR} by constructing gadgets to simulate arithmetic.

\subsection{Gadgets}

We describe the gadgets we use to enforce equations $x_i=2$, $x_i=x_j+x_k$, $x_i = x_j\cdot x_k$ and $x_i= x_j$. Some additional gadgets will be useful.

We start by defining how a drawing of a graph encodes a real number. Given three collinear points (these will be vertices of the graph) labeled $0$, $1$ and $x$, we say $x$ {\em represents} $\pm d(0,x)/d(0,1)$, where $d$ is the Euclidean distance of two points. If $x$ lies on the same side of $0$ as does $1$ (on the common line), we choose the positive value, otherwise we choose the negative value. With this definition we can build our first gadget.

\subsubsection*{Variables}

Figure~\ref{fig:xigadget} shows the gadget we use for variable $x_i$.
The gadget consists of three \Pvx-junctions (labeled $0$, $1$, and $x$ in this order) with the $e_3$-edge of a junction identified with the $e_1$-edge of the next junction. We also add a path connecting the ends $a$, $b$, $c$ of the $e_2$-edges. The $e_4$-edges can then be used to connect to another gadget.

 \begin{figure}[htb]\centering
 \includegraphics[height=0.8in]{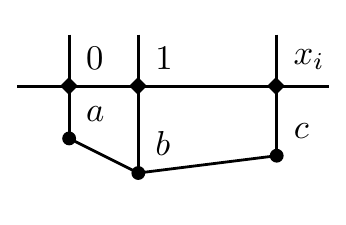}
 \caption{A gadget for variable $x_i$.}\label{fig:xigadget}
 \end{figure}

 In a \RAC-drawing of the $x_i$-gadget, all $e_4$-edges must lie on the same side of the line through $0$, $1$, $x_i$. This is because $ab$ and $bc$ cannot cross the line, since otherwise they would overlap with the $e_4$-edges they are incident to. Thus, the $x_i$-gadget can be used to represent any number $x_i > 1$. By relabeling the junctions, we can also obtain gadgets for variables between $0$ and $1$, and variables less than $0$.

 \subsubsection*{Copying and Moving Information}

Our next gadget will allow us to duplicate information. More precisely, we have points $p_1, \ldots, p_{\ell}$ along a line (in this order).  If two of those points are labeled $0$ and $1$, then all these
points represent numbers. Our gadget allows us to make two copies of these points (along a new common line), that each, by itself, represents the same numbers as the original. See Figure~\ref{fig:copygadget}.

 \begin{figure}[htb]\centering
 \includegraphics{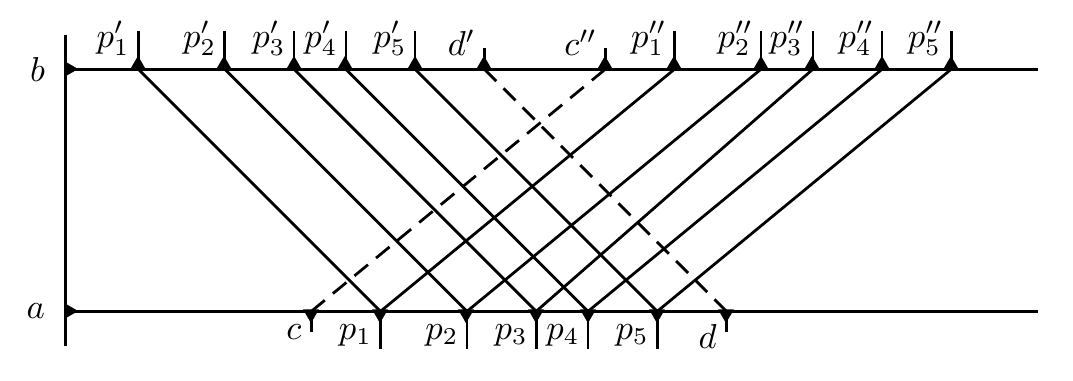}

 \caption{The copy gadget. The dashed edges have to cross orthogonally, forcing the
 $p_ip'_i$ as well as the $p_ip''_i$ edges to be parallel as shown.}\label{fig:copygadget}
 \end{figure}

The two $e_2$-edges incident to the \Tvx-junctions $a$ and $b$ are parallel, since they are both orthogonal to $ab$, and leave $ab$ in the same direction, since otherwise edge $cc''$, for example,
would have to cross $ab$ at a right angle, and overlap $ca$. Because of the relative order of the points on the top and bottom line, edge $cc''$ crosses $dd'$ as well as all edges $p_ip'_i$ at right-angles, so all these edges are parallel. And since every edge $p_jp''_j$ is crossed by $dd'$, these edges are also all parallel. It follows that
$p'_1, \ldots, p'_{\ell}$ and $p''_1, \ldots, p''_{\ell}$ represent the same numbers as $p_1, \ldots, p_{\ell}$.

By repeating the copy gadget, we can make any number of copies of a set of points on a line. There are other applications of the copying gadget, some of which we will see below, but here we can show how to use it to test whether $x_i = x_j$ for two variables $x_i$ and $x_j$ which we know to be larger than $1$. Figure~\ref{fig:eqtestgadget} shows the set-up.

 \begin{figure}[htb]\centering
 \includegraphics{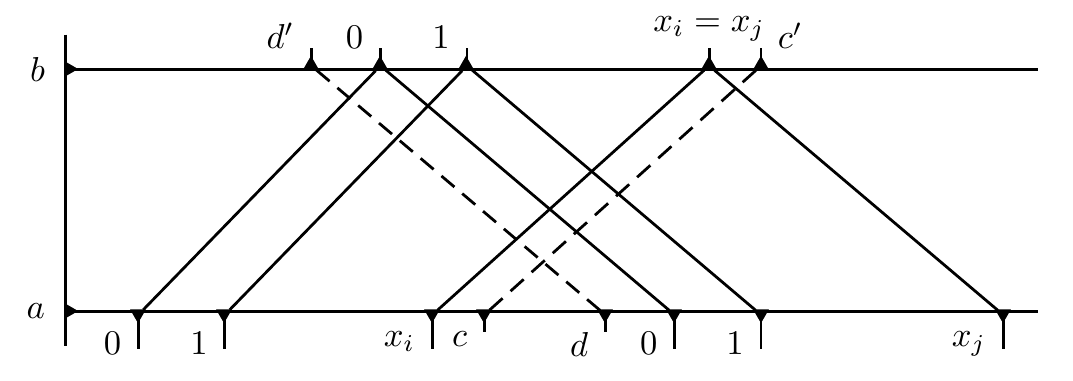}

 \caption{Testing equality $x_i = x_j$ using the copy gadget.}\label{fig:eqtestgadget}
 \end{figure}

We move information around the drawing as parallel lines, with the relative distances of the lines encoding the numbers. However, we may have to change direction, and the gadget shown in Figure~\ref{fig:anglegadget} allows us to create a copy $p'_1, \ldots, p'_{\ell}$ of a set of points $p_1, \ldots, p_{\ell}$ at an angle.

 \begin{figure}[htb]\centering
 \includegraphics{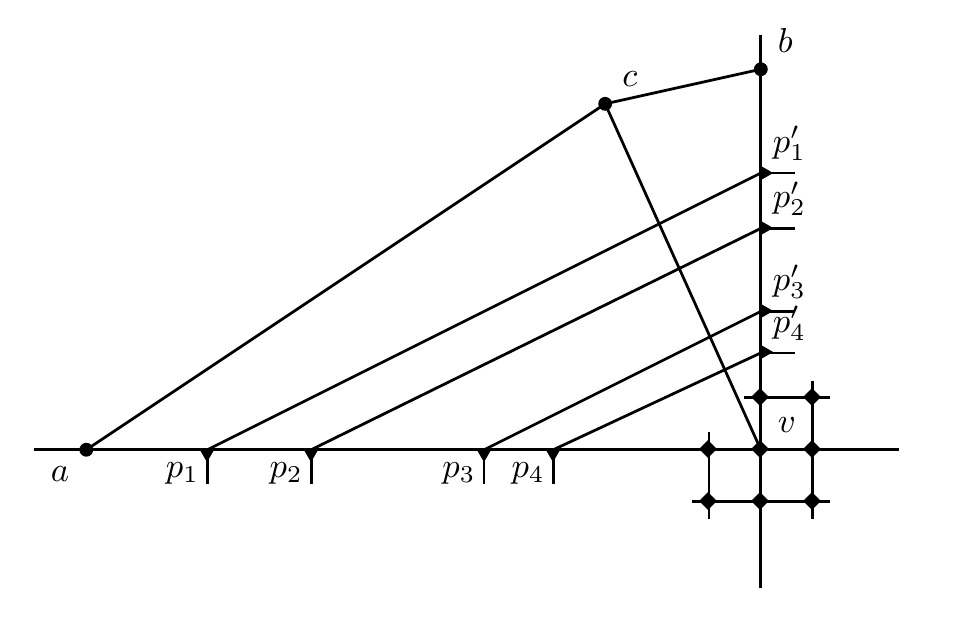}

 \caption{Making a (scaled) copy at a right angle.}\label{fig:anglegadget}
 \end{figure}

 The $p_i$ and $p'_i$ are \Tvx-junctions, and $v$ is a \Pvx-junction. Edges $p_ip'_i$ must lie between the two lines on which the points lie (and on opposite sides
 of the $e_2$-edges of the \Tvx-gadgets for $p_i$ and $p'_i$, since otherwise edges would overlap).
 The \Pvx-junctions surrounding $v$ force $vc$ to also lie between those two
 lines. Then $cv$ must cross each of the edges $p_ip'_i$ which implies that they are parallel,
 so the $p'_i$ have the same relative distances from each other as do
 the $p_i$, and they represent the same numbers.

 By chaining several angling gadget, we can achieve angles of $\pi/2$, $\pi$, $3\pi/2$, and $2\pi$.
 In particular chaining two gadgets, its main axes combined like
 \includegraphics[height=\fontcharht\font`\B]{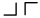}, allows us to continue sending the
 information in the same direction, but at a different
 (arbitrarily chosen) scale. Hence, we can also use the angling gadget to
 rescale information. (The resulting horizontal offset can be compensated for by
 adding two more angling gadgets on top, like
 \includegraphics[height=1.1\fontcharht\font`\B]{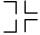}.)


\subsubsection*{Doing Arithmetic I}

Let us start with the number $2$. The gadget shown in Figure~\ref{fig:twogadget} consists of \Pvx-junctions combining four squares with diagonals.

\begin{figure}[htb]\centering
 \includegraphics{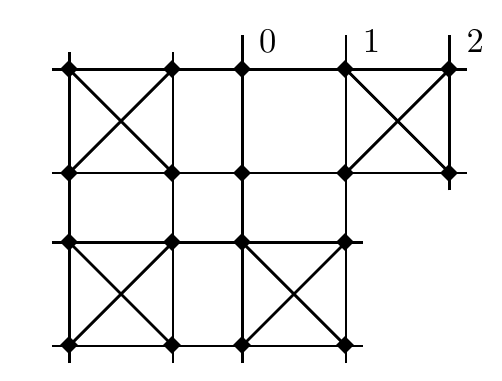}

 \caption{The number $2$ gadget.}\label{fig:twogadget}
\end{figure}

In a \RAC-drawing, the diagonal edges force the sides of each square to have equal length. This forces the length of $01$ and $12$ to be the same.
(Why do we not place two squares with diagonals right next to each other? The reason is that
our \Pvx-junctions only allow additional edges in one quadrant.)

Instead of $0$, $1$, $2$ we can also label the points of the gadget as $-1$, $0$, $1$. This gives us a $-1$ which we can use to build a negation gadget. (While the $x_i$ are all bigger than $1$, we will make use of some additional, temporary, variables that are negative.)

We next build a special negation gadget that, for given points $0$, $1$, and $x$ creates
four points, $-x-1$, $-x$, $0$, and $1$ that correctly represent their labels. See Figure~\ref{fig:neggadget}. On the lower left is a $2$-gadget with points relabeled $-1$, $0$, and
$1$. We added two points to this gadget, labeled $x'$ and $x'+1$. Using the diagonals, the distance between $x'$ and $x'+1$ is the same as the distance between $0$ and $1$ in this gadget. We then use a copy gadget in reverse to combine this information with the $x$-gadget. The resulting five points then correspond to $-1$, $0$, $1$, $x$, and $x+1$, or, more useful for us $-x-1$, $-x$, $-x+1$, $0$ and $1$. Of these we only need four as output, namely $-x-1$, $-x$, $0$ and $1$.

\begin{figure}[htb]\centering
 \includegraphics{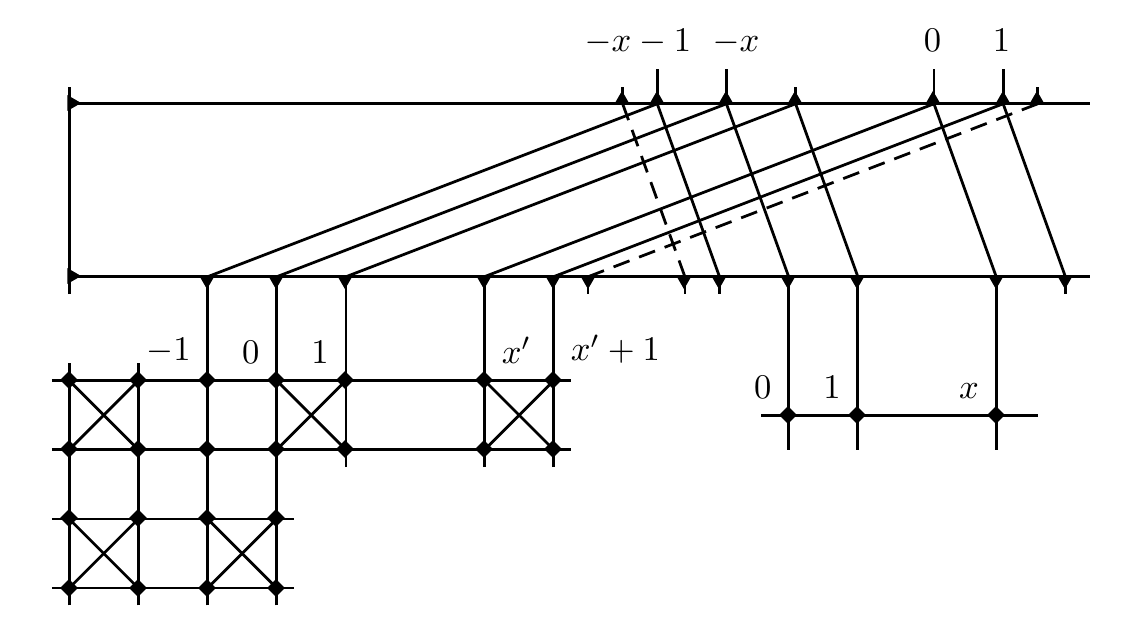}

 \caption{Negating $x$ gadget.}\label{fig:neggadget}
\end{figure}

\subsubsection*{Doing Arithmetic II}

To compute $x_i = x_j+x_k$, we use the negation gadget to create $-x_j-1$ and $-x_j$, and then perform the addition geometrically, as shown in Figure~\ref{fig:addgadget}. The reason for using negation is that this way we do not have to know whether $x_j$ or $x_k$ is the larger one, something that's unavoidable if we place two variables greater than $1$ on the same line. (This idea is due to Richter-Gebert~\cite{RG95}, also see Matou{\v{s}}ek~\cite{M14}.)

\begin{figure}[htb]\centering
 \includegraphics[width=5in]{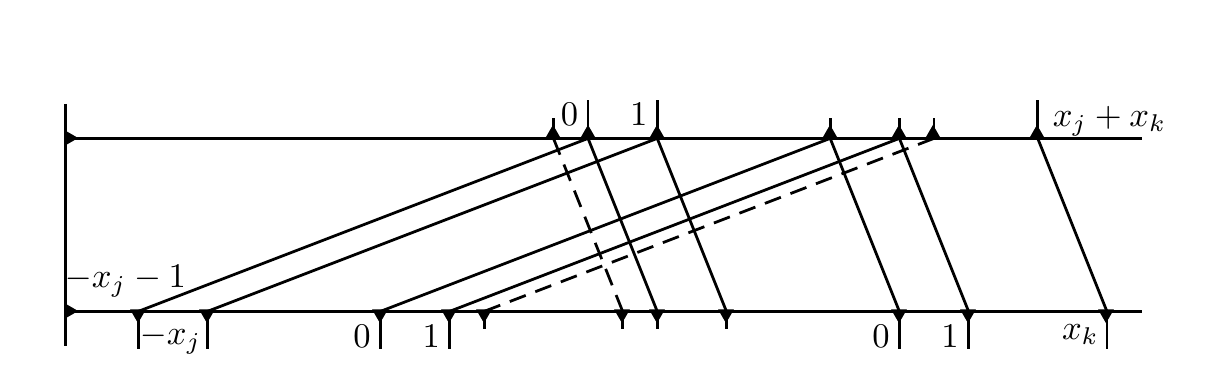}

 \caption{Addition gadget computing $x_i = x_j+x_k$.}\label{fig:addgadget}
\end{figure}

As in the other gadgets, the two dashed edges are orthogonal, and force the other lines
to be parallel. If we ignore those two edges and their endpoints, the remaining points on top represent $-x_j-1$, $-x_j$, $0$, $1$, $x_k$. If we relabel the first and second points as $0$ and $1$, then the fifth point represents $x_j+x_k$, and
we have the addition gadget we needed.

To compute $x_i = x_j\cdot x_k$ we use the same trick mentioned above to resolve the issue that we do not know which of $x_j$ and $x_k$ is bigger, so we cannot place them along the same line. Instead of working with negation, here we work with the reciprocal of $x_j$, which lies (strictly) between $0$ and $1$.  The reciprocal gadget is not much of a gadget, just a relabeling, see Figure~\ref{fig:reciprocalgadget}.

\begin{figure}[htb]\centering
 \includegraphics{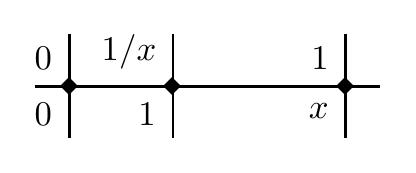}

 \caption{Computing the reciprocal $1/x$ of a variable $x > 1$.}\label{fig:reciprocalgadget}
\end{figure}

This is cheating a bit, since for most other gadgets (except the angling gadget) the distance between $0$ and $1$ did not change, and most gadgets assume they are the same when
processing inputs. So before using the result of the reciprocal gadget, we rescale it, so
that $0$ and $1$ have the standard distance.

With that, the product $x_i = x_j\cdot x_k$ can then be calculated using the multiplication gadget shown in Figure~\ref{fig:multiplygadget}. It is simply a copy gadget upside down
that allows us to merge $1/x_j$ and $x_k$ into a common scale.

\begin{figure}[htb]\centering
 \includegraphics{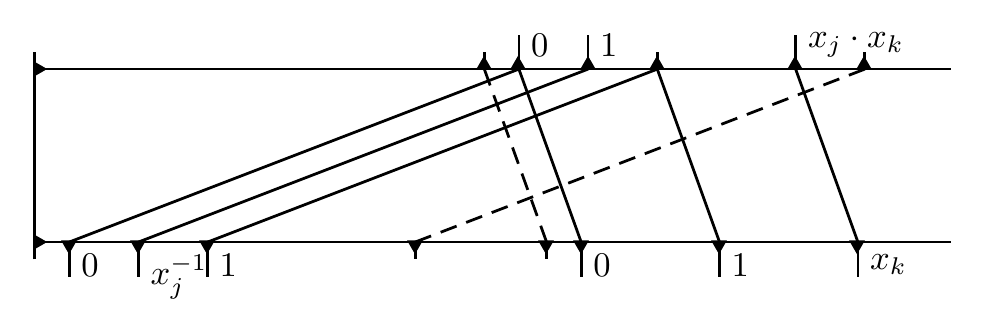}

 \caption{Multiplication gadget computing the product $x_i = x_j\cdot x_k$.}\label{fig:multiplygadget}
\end{figure}

The points along the top line (ignoring the endpoints of dashed lines which enforce orthogonality) represent $0$, $1/x_j$, $1$ and $x_k$, which we can relabel
as $0$, $1$, $x_j$ and $x_j\cdot x_k$, at which point we can drop the $x_j$ to
obtain a gadget computing $x_j\cdot x_k$. As did the reciprocal gadget above, this gadget changes the scale, and we need to rescale to
reestablish the standard distance between $0$ and $1$.

\subsection{Proof of Theorem~\ref{thm:TPvxNPR}}

We can assume that we are given a system of equations over variables $x_i$, $i \in [n]$ as described in Theorem~\ref{thm:ETReqs}. Our goal is to (efficiently) construct a graph $G$ with junctions, so
that the system of equations is solvable, if and only if the graph $G$ has a \RAC-drawing. Moreover, if the system is solvable, then $G$ has a crossing-free \RAC-drawing  (remember that junctions do not count as crossings).

We build $G$ in several stages. We first create gadgets for the main operations.
\begin{itemize}
\itemi For every variable $x_i$, $i \in [n]$, we create a variable gadget.
\itemii We have equations of four types: $x_i = 2$, $x_i = x_j$, $x_i = x_j+x_k$, and $x_i = x_j\cdot x_k$, and we create a corresponding gadget for each ($x_i = 2$ can be built by combining an equality gadget with a $2$-gadget; the addition and multiplication gadgets include the gadgets for negation and reciprocals).
\itemiii If variable $x_i$ occurs in $\ell$ equations, we create $\ell$ copy-gadgets.
\end{itemize}

Let use say this gives us $m$ gadgets. Place all $m$ gadgets along a vertical line spaced far apart, and so that incoming and outgoing (information carrying) edges are vertical.

We now need to connect the gadgets using angling gadgets. We need the following connections: the variable $x_i$ gadget needs to be connected as an input to one of the $\ell$ copy gadgets representing it, and we need to connect these copy gadgets so that we have $\ell$ outputs corresponding to $x_i$; finally, if $x_i$ occurs in an equation, we need to connect one of the unused outputs from the bank of copy gadgets representing $x_i$ to the equation. 

The connections between the $m$ original gadgets can be built from a chained sequence of at most eight angling gadgets.  This is already sufficient to get a $G$ that has a \RAC-drawing, but we need one more step to remove all crossings, so we describe how to place the angling gadgets more carefully; the placement only depends on the original equations, not on a specific solution to the equations.

Suppose we are connecting gadget $\alpha$ to gadget $\beta$,
with $\alpha, \beta < m$. Start at gadget $\alpha$. At most two angling gadgets let us leave the gadget $\alpha$ horizontally to the right up to a distance of $\alpha+\beta\cdot m$. Another two angling gadgets allow us to angle so we can move vertically downwards or upwards until we have reached the height of gadget $\beta$. Two more angling gadgets take us back horizontally to $\beta$, and we can connect to $\beta$ with another two angling gadgets.

This process introduces (orthogonal) crossings between edges of angling gadgets, but we can determine exactly what those crossings are (whether the gadgets are realizable, or not). We replace all crossings within gadgets and the newly introduced crossings between angling gadgets with \Pvx-junctions. The resulting graph is graph $G$ and we have also described a plane embedding of $G$ (which need not satisfy the junction-constraints, of course).

If the system of equations is solvable, we can use a solution to create a \RAC-drawing of each gadget, and, following the description above, create an isomorphic \RAC-drawing of $G$. In particular, there are no crossings.

On the other hand, if there is a \RAC-drawing of $G$, all the gadgets work as described, and each occurrence of a variable represents the same value, so there is a solution to the system of equations.


\subsection{Forcing Empty Faces}

To model \Tvx- and \Pvx-junctions as normal vertices we make use of another restricted
drawing mode that simplifies the construction. Given a graph $G$ with $k$ pairwise disjoint
sets of vertices $V_i \subseteq V(G)$, $i \in [k]$, we are interested in drawings (or \RAC-drawings) of $G$ in which the vertices of each $V_i$ lie on the boundary of an empty face, for $i \in [k]$.

The following theorem shows that this type of drawing constraint can be removed, even in \RAC-drawings.

\begin{theorem}\label{thm:emptyfaces}
  Let $G$ be a graph, and let $(V_i)_{i\in[k]}$ be $k$ pairwise disjoint sets of vertices of $G$. We can then construct, in polynomial time, a graph $G'$ so that $G$ has a \RAC-drawing in which the vertices of each $V_i$ lie on the boundary of an empty face, if and only if $G'$ has a \RAC-drawing.
\end{theorem}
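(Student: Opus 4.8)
The plan is to attach to each $V_i$ a rigid ``shell'' gadget $S_i$ built on fresh vertices, and to let $G'$ be the union of $G$ with $S_1,\dots,S_k$ together with the edges joining $S_i$ to the vertices of $V_i$. Each shell is engineered to have two complementary properties. \emph{Realizability}: whenever $G$ has a \RAC-drawing in which $V_i$ lies on the boundary of an empty face $F_i$, the shell $S_i$ and its connecting edges can be drawn inside $F_i$ without creating any crossing and without violating the right-angle condition; performing this for all $i$ (the shells for distinct indices can be placed in pairwise disjoint sub-disks of their faces) converts a constrained \RAC-drawing of $G$ into a \RAC-drawing of $G'$. \emph{Rigidity}: in \emph{every} \RAC-drawing of $G'$, the shell $S_i$ forces the vertices of $V_i$ onto the boundary of a face of $G'$ whose interior meets neither a vertex nor an edge of $G$; deleting $S_i$ and its connecting edges from such a drawing then leaves $V_i$ on an empty face of $G$, because removing these pieces can only merge this face with neighbouring ones and nothing of $G$ was ever inside it. The ``if'' and ``only if'' directions of the theorem are exactly these two properties, and polynomiality is immediate once the shells have size $O(|V_i|)$.

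For the shell itself I would take a cycle $C_i = u_1u_2\cdots u_tu_1$ on new vertices ($t=|V_i|$), with the vertices of $V_i$ attached as pendant stubs $u_jv_j$ pointing to the outside of $C_i$, stiffened so that in every \RAC-drawing $C_i$ is drawn as a fixed convex polygon. The rigidity of the polygon is obtained exactly as in the number-$2$ gadget of Figure~\ref{fig:twogadget}: adding diagonals forces prescribed length ratios, and iterating this pins down the shape of $C_i$ up to similarity, which in particular rules out self-crossings of $C_i$. The crucial additional ingredient is to make the rim of $C_i$ \emph{uncrossable} by any other edge in a \RAC-drawing. This rests on the elementary fact used repeatedly in the gadgets above: if an edge $e$ crosses two edges $f$ and $g$, then $e\perp f$ and $e\perp g$, so $f\parallel g$. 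Hence I would thicken each rim edge into a short band of pairwise non-parallel (and rigidly positioned) edges, arranged so that any edge of $G$ trying to enter the region would have to cross two non-parallel edges of the band and could meet at most one of them at a right angle. With $C_i$ convex and its rim uncrossable, the region it bounds on the stub-free side is a single empty face of $G'$ with all of $V_i$ on its boundary; the pendant stubs $u_jv_j$ sit outside $C_i$ and can themselves be shielded so they do not obstruct the rest of the drawing.

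I expect the uncrossable-boundary step to be the main obstacle. Making the rim rigid is routine given the $2$-gadget, but one has to argue carefully that the band really forces an intruder to cross two non-parallel edges of the shell — in particular that an intruder cannot slip through \emph{at a vertex} $u_j$ of the shell, or between two consecutive bands. This requires controlling the rotation and the free angular sectors at the $u_j$ and at the band vertices so that the shield edges leave no gap, and it is essentially the only place where more than the parallel-forcing idea is needed. Everything else is bookkeeping: verifying that the face of $G'$ we single out is genuinely empty and genuinely carries all of $V_i$ on its boundary, that deleting the shells restores an empty face of $G$, and (for the later application to Theorem~\ref{thm:RACNPR}) that the newly introduced edges can be made to carry only a bounded number of crossings.
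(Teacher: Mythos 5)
There is a genuine gap, and it sits exactly where your construction gets its strength from: rigidity. You propose to make the cycle $C_i$ rigid ``exactly as in the number-$2$ gadget of Figure~\ref{fig:twogadget}'', but that gadget is built from \Pvx-junctions; the squares-with-diagonals force equal side lengths only because the junction constraints force the diagonals to cross the sides orthogonally. Theorem~\ref{thm:emptyfaces} is applied \emph{after} all junctions have been simulated away, so $G'$ must be a plain graph, and a plain cycle with diagonals has no forced shape whatsoever in a \RAC-drawing (a $4$-cycle plus a diagonal is just planar $K_4$ minus an edge). Without rigidity your ``uncrossable band'' of pairwise non-parallel edges cannot be pinned down either, and the sealing problem at the shell vertices that you flag as the main obstacle never even gets off the ground. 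A second gap is in the realizability direction: in a straight-line drawing you cannot simply ``place the shell inside $F_i$'', because $F_i$ may be non-convex and each pendant stub $u_jv_j$ must be a straight segment from the shell to a boundary vertex; the paper has to use long subdivided paths and invoke an external result (Theorem~1 of~\cite{CFGLMS15}) to insert a star into an existing straight-line drawing without creating crossings. Finally, your $v_j$'s sit \emph{outside} $C_i$ at the far ends of stubs, so they are not on the boundary of the enclosed face, and nothing prevents an edge of $G$ from crossing a stub, after which deleting the shell need not leave $v_j$ on the merged face.

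The paper's mechanism is much lighter and avoids rigidity entirely. It adds one hub $v_i$ joined to each vertex of $V_i$ by a path of length $36n$, and replaces every new edge by $\ell = 146n^2$ parallel paths of length $2$ (a $K_{2,\ell}$). The only geometric fact used is the one you already identified: two edges sharing an endpoint cannot both cross the same edge at right angles, so a foreign edge kills at most two of the $\ell$ paths in each bundle. A pigeonhole argument then extracts, bundle by bundle, a completely crossing-free subdivided wheel from $v_i$ to $V_i$, which forces all of $V_i$ onto the boundary of the face of $G$ containing $v_i$ after the scaffolding is deleted. I would redirect your effort from ``rigid uncrossable walls'' to this ``many parallel paths, one survives'' counting scheme: it needs no control over shape, no sealing at vertices, and the orthogonality constraint does all the work.
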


For the proof, remember that if a graph has a \RAC-drawing, it can have at most $4n-10$ edges~\cite{DEL11}.

\begin{proof}
Let $v_1, \ldots, v_k$ be $k$ vertices not in $G$. Connect $v_i$ to each vertex in $V_i$ by a path of length $36n$, where $n = |V(G)|$, for $1 \leq i \leq k$. Finally, replace each of the newly added edges by $\ell = 146n^2$ paths of length $2$ (that is, a $K_{2,\ell}$). See Figure~\ref{fig:boundarypaths} for an illustration. Call the resulting graph $G'$. Suppose $G'$ has a \RAC-drawing.

\begin{figure}[htb]\centering
 \includegraphics[height=1.9in]{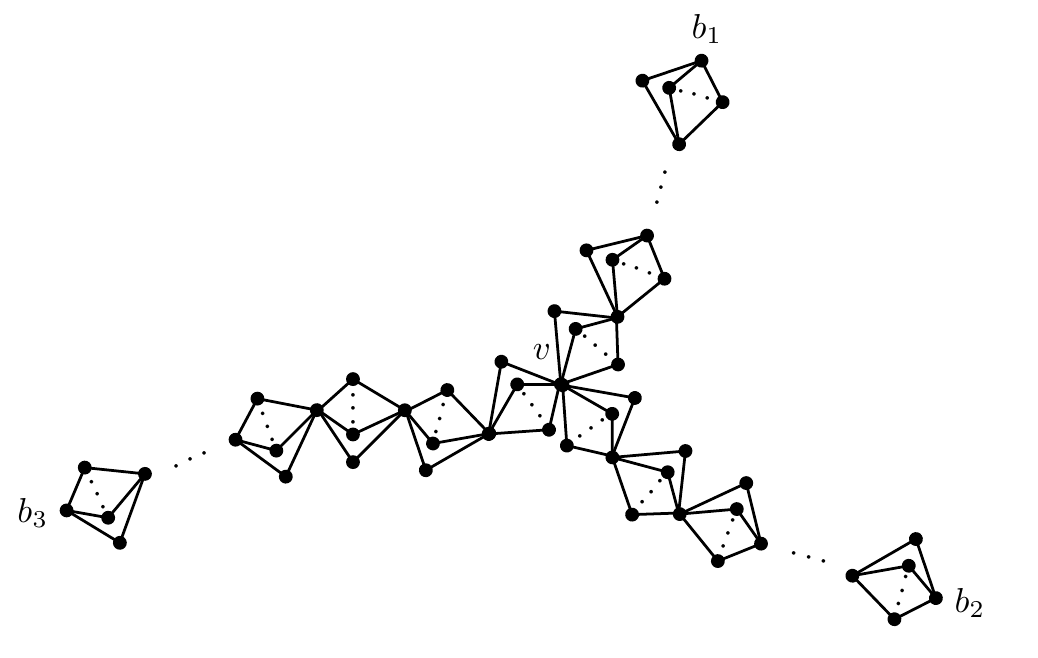}

 \caption{Forcing an empty face; in this example $v = v_i$ and $V_i = \{b_1, b_2, b_3\}$.}\label{fig:boundarypaths}
\end{figure}

Since we added at most $n$ paths of length $36n$ to $G$, we have at most $36n^2$ of the $K_{2,\ell}$-graphs in $G'$. We consider them one at a time. Let $E_0 = E(G)$. An edge can cross at most one edge of a $K_{1,\ell}$ at right angles (otherwise, the edges of $K_{1,\ell}$ would overlap). Hence, an edge can cross at most two edges of a $K_{2,\ell}$.
Since $\ell > 2n = 2|E_0|$, the first $K_{2,\ell}$ contains a path $P_1$ of length $2$ that crosses none of the edges in $E_0$. Let $E_1 = E_0 \cup E(P_1)$. If we keep repeating this argument, we obtain $E_i = E_{i-1} \cup E(P_i)$, and $|E_i| = n + 2i$. Then the $(i+1)$-st $K_{2,\ell}$ must contain a path $P_{i+1}$ of length $2$ which crosses none of the edges in $E_i$, since $2|E_i| = 2(n+2i) < \ell$, for all $1 \leq i \leq 36n^2$. We conclude that the \RAC-drawing of $G'$ contains a \RAC-drawing of $G$ together with the $v_i$ and paths from $v_i$ to each vertex in $V_i$ so that none of the paths are involved in any crossings. In other words, for each $i$ there is a crossing-free (subdivided) wheel with center $v_i$ and a perimeter containing $V_i$. Removing all edges not belonging to $G$ gives us a \RAC-drawing of $G$ in which all vertices of $V_i$ lie on the boundary of the same face (the one that contained $v_i$).

For the other direction, suppose $G$ has a \RAC-drawing in which all vertices of each $V_i$ lie on the boundary of the same face. For each $i$, we create a new vertex $v_i$ and a paths of length $72n$ connecting $v_i$ to each vertex in $V_i$. By Theorem~1 in~\cite{CFGLMS15}, the additional edges and vertices can be added to the already existing drawing of $G$ without creating any new crossings. We can then duplicate appropriate sub-paths of length $2$ to obtain a \RAC-drawing of $G'$.
\end{proof}

\section{Proof of Theorem~\ref{thm:RACNPR}}\label{sec:RACNPR}

Bieker~\cite[Section 6.2]{B20} shows that the problem lies in \NPR. To prove \NPR-hardness,
we are missing one more ingredient, a way to simulate junctions in \RAC-drawings.

\begin{theorem}\label{thm:remjunc}
 Let $D$ be a planar drawing of a graph $G$ with some vertices identified as \Tvx- and
 \Pvx-junctions. We can efficiently construct a graph $G'$ without junctions, with
  vertex sets $(V_i)_{i\in [k]}$, and a drawing $D'$ of $G'$ so that:
  \begin{itemize}
   \itemi if $D$ is isomorphic to a \RAC-drawing of $G$, then $D'$ is isomorphic to a \RAC-drawing of $G'$, and all edges are involved in at most ten crossings.
    \itemii if $G$ does not have a \RAC-drawing, then $G'$ does not have a \RAC-drawing
    in which all the vertices of each $V_i$ lie on a common boundary.
 \end{itemize}
\end{theorem}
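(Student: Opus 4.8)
The plan is to give, for every junction of $D$, a local \emph{rigidity gadget} assembled from ordinary vertices and straight edges, whose only geometric obligation in a \RAC-drawing — a right angle at every crossing — reconstructs the angle and rotation constraints that the junction imposed. The only tool available is the propagation rule already exploited throughout Section~\ref{sec:RACAC}: two edges that cross a common edge at right angles are parallel, two parallel edges sharing an endpoint are collinear, and an edge crossing two edges that cross each other sees them as perpendicular. I would first dispose of \Pvx-junctions. Checking the definitions, a \Pvx-junction at $v$ with special edges $e_1,e_2,e_3,e_4$ is exactly the conjunction of two \Tvx-junctions at $v$, on the triples $(e_1,e_2,e_3)$ and $(e_2,e_3,e_4)$: the perpendicularities and the collinearities $e_1\parallel e_3$, $e_2\parallel e_4$ and the rotation $e_1e_2e_3e_4$ all follow, and the ``additional edges in one quadrant'' clause comes out as the intersection of the two ``additional edges opposite the middle edge'' clauses (a suitable choice of triples hits the prescribed quadrant). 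So it suffices to allow, and then simulate, \Tvx-junctions; a vertex may carry two of them, and its two gadgets then attach in the two relevant regions without interfering. (One could instead simulate \Pvx-junctions directly, which is shorter but less modular.)

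For a \Tvx-junction at $v$ with special edges $e_1=va_1$, $e_2=va_2$, $e_3=va_3$, the gadget attaches a constant number of auxiliary edges near $v$: one auxiliary edge crossing $e_1$ and one crossing $e_2$ that also cross each other (forcing $e_1\perp e_2$), a symmetric pair for $e_2$ and $e_3$ (forcing $e_2\perp e_3$, whence $e_1\parallel e_3$ and, since both pass through $v$, the points $a_1,v,a_3$ become collinear), and a common transversal tying the two halves together. A subtlety to respect is that \emph{no} auxiliary edge may be forced to cross two special edges that the gadget makes collinear, since a straight segment meets a line only once; so the transversals have to be routed to one side of $v$ at a time. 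The cyclic order $e_1e_2e_3$ (or its reverse) and the placement of the non-special edges of $v$ in the half-plane opposite $e_2$ are not enforced metrically; I would bake them into the reference drawing $D'$ and let the required isomorphism to the \RAC-drawing carry them over, using that $e_1\parallel e_3$ leaves only the straight-angle option once $e_2$ is topologically between them. The degenerate case in which $e_2$ ends in a leaf is the ``straight line with a marked vertex'' reading of Figure~\ref{fig:TvxandPvx} and needs only the straightening half of the gadget.

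The heart of the proof — and the step I expect to be the main obstacle — is \emph{forcing} those crossings: a priori a \RAC-drawing of $G'$ could route the auxiliary edges so that they cross nothing, or cross the wrong edges, and then the gadget is vacuous. This is precisely why the theorem carries the vertex sets $(V_i)$. Around each gadget I would install a bounded-size cage of edges, designating a set $V_i$ of its vertices, so that the hypothesis ``the vertices of $V_i$ lie on a common face boundary'' traps the auxiliary edges and the stubs of $v$ inside a region whose only available transversals are the intended special edges; then the intended crossings become topologically unavoidable and no stray crossing among the relevant edges is possible. The cage has to be carefully calibrated: if it forces too little, the gadget is slack; if it forces too much — an extra right-angle crossing, say — it may exclude a geometrically valid configuration and wrongly render $G'$ non-drawable when $G$ is drawable. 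This calibration is also where the crossing budget is spent: each special edge collects its crossing with the straightening or squaring edge plus a bounded number of cage crossings, and, since the drawing $D$ of $G$ was arranged in the proof of Theorem~\ref{thm:TPvxNPR} to be crossing-free with all former crossings converted into \Pvx-junctions, the only crossings of $D'$ are these gadget crossings — so the layout of the gadgets must keep every edge below ten of them. Making the cages this light, and non-interfering from gadget to gadget, is the technical core.

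With the gadgets and the forcing lemma in hand, the two claims become routine. For (i): a \RAC-drawing of $G$ isomorphic to $D$ is, by the promise in Theorem~\ref{thm:TPvxNPR}, crossing-free and satisfies all junction constraints, so I would draw each gadget inside a small disk about its junction using the right angles now present, route the auxiliary edges and the cage inside that disk, and verify the result is isomorphic to the prescribed $D'$, is a \RAC-drawing, places each $V_i$ on a common (indeed empty) face boundary, and has at most ten crossings per edge. For (ii): in any \RAC-drawing of $G'$ in which each $V_i$ lies on a common boundary, the forcing lemma makes every gadget behave as designed, so deleting all auxiliary and cage edges leaves a \RAC-drawing of $G$ meeting every \Tvx-constraint — hence, undoing the \Pvx-to-\Tvx rewriting, every junction constraint — so $G$ has a \RAC-drawing; this is the contrapositive of (ii). Feeding the resulting $G'$ into Theorem~\ref{thm:emptyfaces} then removes the common-boundary hypothesis, and chaining with Theorems~\ref{thm:TPvxNPR} and~\ref{thm:ETReqs} yields Theorem~\ref{thm:RACNPR}.
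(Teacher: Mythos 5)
Your overall architecture matches the paper's: reduce $\Pvx$-junctions to $\Tvx$-junctions, build a local gadget whose only mechanism is chains of forced orthogonal crossings, use the sets $V_i$ (vertices on a common face boundary) to make those crossings topologically unavoidable, and then count crossings per edge. But the proposal stops exactly where the proof begins. The entire content of the paper's argument is the explicit construction of what you call the ``cage'': a concrete graph (the \emph{double cap} on vertices $a_0,\ldots,a_{10},b_0,\ldots,b_{10},c_0,c_1$) together with a case analysis (Lemma~\ref{lem:doublecap}) showing that the two face conditions alone force its unique crossing pattern, hence force $\alpha c_0$ and $c_1\beta$ to be collinear and $a_0\gamma$ to be parallel to them; two such double caps are then assembled into the $\Tvx$-junction gadget (Lemma~\ref{lem:Tjunctiongadget}), and the caps of adjacent gadgets are merged so that each gadget's attachment vertices provably lie outside it. You explicitly defer this (``making the cages this light\ldots is the technical core''), so the claim that the intended crossings are forced, the claim that no valid configuration is excluded, and the bound of ten crossings per edge are all unverified. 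Without a concrete gadget none of (i) or (ii) is established.

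Two further points would need repair even at the plan level. First, for direction (ii) you propose to enforce the rotation at $v$ and the half-plane placement of the non-special edges ``by baking them into $D'$ and letting the isomorphism carry them over'' --- but (ii) carries no isomorphism hypothesis at all; its only assumption is that each $V_i$ lies on a common boundary, so every rotational constraint must be extracted from the gadget itself. (The paper does this: the face conditions pin down the embedding of each double cap up to homeomorphism, and the attachments are then forced.) Second, your replacement of a $\Pvx$-junction by two $\Tvx$-junctions \emph{at the same vertex} creates exactly the interference problem you are trying to avoid: each special edge at $v$ would have to serve two overlapping $\Tvx$-gadgets whose auxiliary edges all cross it near $v$. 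The paper instead replaces a $\Pvx$-junction by \emph{four} $\Tvx$-junctions at four new vertices forming a small quadrilateral (Figure~\ref{fig:PvxtoTvx}), which keeps the gadgets disjoint; some such disjointification is needed for your plan to go through.
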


The proof of Theorem~\ref{thm:remjunc} can be found in Section~\ref{sec:remjunc}.
Let us see how this theorem completes the proof of Theorem~\ref{thm:RACNPR}; we first consider the fixed embedding case. By Theorem~\ref{thm:TPvxNPR}
it is \NPR-hard to test whether a graph $G$ with junctions has a \RAC-drawing, even if we know
that the graph either has no \RAC-drawing, or that it has a \RAC-drawing isomorphic to a given planar drawing $D$. Using Theorem~\ref{thm:remjunc}, we construct a graph $G'$ and a drawing $D'$ so that
$D'$ is isomorphic to a \RAC-drawing if and only if $G$ has a \RAC-drawing. This implies that the fixed embedding version of \RAC-drawability is \NPR-complete.

To show that the problem is \NPR-complete without fixing the embedding, we need to take one
more step: By $(ii)$ we can reduce to $G'$ having a \RAC-drawing in which all the
vertices of each $V_i$ lie on a common boundary. Using Theorem~\ref{thm:emptyfaces}, this then reduces to \RAC-drawability (without a fixed embedding).

\section{Precision and Area}\label{sec:PaA}

Given an arbitrary algebraic number $\alpha$, we can write it as the unique solution of an integer polynomial equation (adding constraints to achieve uniqueness). E.g. $x = \sqrt{2}$ would be the unique $x$-value for which there is a solution of $(x^2 -2)^2 + (x-y^2)^2 = 0$. Using the reduction of polynomial equations to \RAC-drawability we have seen, we can build a graph $G$ so that in any realization of $G$ there are three collinear points that represent $\alpha$. We are not limited to just one number. For any semi-algebraic set $S$ in free variables $x_i, i \in [n]$ we can build a graph $G$ so that the triples $(0'_i,1'_i, x'_i)$ and $(0''_i,1''_i, x''_i)$  representing $x_i = x'_i-x''_i$ represent the semi-algebraic set in the sense that $S$ consists of the points $(d(0'_i,x'_i)/d(0'_i,1'_i)-d(0''_i,x''_i)/d(0''_i, 1''_i))_{i \in [n]}$.

Similarly, we can build a graph $G$ representing equations $x_1 = 2$, $x_2 = x_1\cdot x_1$, $x_3 = x_2\cdot x^2$, $\ldots$, $x_n = x_{n-1}\cdot x_{n-1}$. Then any realization of $G$ contains three points representing $0$, $1$, and $2^{2^n}$. So $G$ is a graph of polynomial size which requires
double-exponential area. In this example, the points in the gadgets can even be placed so as to lie
on a grid.

\section{Open Questions}

Can we relax the right-angle restriction? Huang, Eades, and Hong~\cite{HEH14} studied the impact of large angles (vs right angles) on the readability of drawings, and concluded (among other things) that large angles improve the readability of drawings, but the angles do not have to be right angles. They express the hope that the computational problem becomes easier if the right-angle restriction is relaxed. Using common tricks for the existential theory of the reals, it is possible to show that the \RAC-drawability problem remains \NPR-complete even if we relax the angle constraint, and require the angle to lie in the interval $(\pi/2-\varepsilon, \pi/2+\varepsilon)$, where $\varepsilon$ depends on $n = |V(G)|$ (doubly exponentially so). Does the problem remain \NPR-hard for a fixed value $\varepsilon>0$? The current construction very much relies on precision to simulate the existential theory of the reals. It is not clear whether gadgets can be braced to still work if angles are only approximate.

We saw that testing \RAC-drawability remains \NPR-hard, even if there is a \RAC-drawing with at most $10$ crossings per edge. Can that number be lowered? We note that the \NP-hardness result remains
true even for $1$-planar drawings (at most one crossing per edge)~\cite{BDL17}. Is
the problem in \NP? A somewhat similar situation occurs for the geometric local crossing number, $\rlcr(G)$, that is the smallest number of crossings along each edge in a straight-line drawing of $G$. Testing whether $\rlcr(G) \leq 1$ is \NP-complete~\cite{S14}, but there is a fixed $k$ so that
testing $\rlcr(G) \leq k$ is \NPR-complete~\cite{S21}.

Does \RAC-drawability remain \NPR-hard for bounded-degree graphs? Nearly all of our gadgets have bounded degree, the only exception are the empty-face gadgets, which are based on $K_{2,n}$'s, and
require unbounded degree. Can these be replaced with bounded degree gadgets?

The {\em right-angle crossing number} of a graph is the smallest $k$ so that $G$ has a \RAC-drawing with at most $k$ crossings. Our result implies that testing whether the right-angle crossing number is finite is \NPR-complete. What about small fixed values? Can we test whether a graph has a \RAC-drawing with one, two, three crossings in polynomial time? What about fixed $k$?

\bibliographystyle{splncs04}
\bibliography{rac-arxiv}

\begin{thebibliography}{10}
\providecommand{\url}[1]{\texttt{#1}}
\providecommand{\urlprefix}{URL }
\providecommand{\doi}[1]{https://doi.org/#1}

\bibitem{A21}
Abrahamsen, M.: Covering polygons is even harder. ArXiv e-prints  (2021),
  \arxiv{2106.02335} (last accessed 7/22/2021)

\bibitem{ACDFKS11}
Angelini, P., Cittadini, L., Di~Battista, G., Didimo, W., Frati, F., Kaufmann,
  M., Symvonis, A.: On the perspectives opened by right angle crossing
  drawings. J. Graph Algorithms Appl.  \textbf{15}(1),  53--78 (2011),
  \url{https://doi.org/10.7155/jgaa.00217}

\bibitem{ABS12}
Argyriou, E.N., Bekos, M.A., Symvonis, A.: The straight-line {RAC} drawing
  problem is {NP}-hard. J. Graph Algorithms Appl.  \textbf{16}(2),  569--597
  (2012), \url{https://doi.org/10.7155/jgaa.00274}

\bibitem{AFKMT12}
Arikushi, K., Fulek, R., Keszegh, B., Mori\'{c}, F., T\'{o}th, C.D.: Graphs
  that admit right angle crossing drawings. Comput. Geom.  \textbf{45}(4),
  169--177 (2012), \url{https://doi.org/10.1016/j.comgeo.2011.11.008}

\bibitem{BDL17}
Bekos, M.A., Didimo, W., Liotta, G., Mehrabi, S., Montecchiani, F.: On {RAC}
  drawings of 1-planar graphs. Theoret. Comput. Sci.  \textbf{689},  48--57
  (2017), \url{https://doi.org/10.1016/j.tcs.2017.05.039}

\bibitem{B20}
Bieker, N.: Complexity of Graph Drawing Problems in Relation to the Existential
  Theory of the Reals. Bachelor's thesis, Karlsruhe Institute of Technology
  (August 2020)

\bibitem{BOZ21}
Boomari, H., Ostovari, M., Zarei, A.: Recognizing visibility graphs of
  triangulated irregular networks. Fundam. Informaticae  \textbf{179}(4),
  345--360 (2021), \url{https://doi.org/10.3233/FI-2021-2027}

\bibitem{BDEKLM16}
Brandenburg, F.J., Didimo, W., Evans, W.S., Kindermann, P., Liotta, G.,
  Montecchiani, F.: Recognizing and drawing {IC}-planar graphs. Theoret.
  Comput. Sci.  \textbf{636},  1--16 (2016),
  \url{https://doi.org/10.1016/j.tcs.2016.04.026}

\bibitem{C88}
Canny, J.: Some algebraic and geometric computations in pspace. In: STOC '88:
  Proceedings of the twentieth annual ACM symposium on Theory of computing. pp.
  460--469. ACM, New York, NY, USA (1988),
  \url{http://doi.acm.org/10.1145/62212.62257}

\bibitem{CFGLMS15}
Chan, T.M., Frati, F., Gutwenger, C., Lubiw, A., Mutzel, P., Schaefer, M.:
  Drawing partially embedded and simultaneously planar graphs. J. Graph
  Algorithms Appl.  \textbf{19}(2),  681--706 (2015),
  \url{https://doi.org/10.7155/jgaa.00375}

\bibitem{D20}
Didimo, W.: Right angle crossing drawings of graphs. In: Hong, S., Tokuyama, T.
  (eds.) Beyond Planar Graphs, Communications of {NII} Shonan Meetings, pp.
  149--169. Springer (2020), \url{https://doi.org/10.1007/978-981-15-6533-5\_9}

\bibitem{DEL11}
Didimo, W., Eades, P., Liotta, G.: Drawing graphs with right angle crossings.
  Theoret. Comput. Sci.  \textbf{412}(39),  5156--5166 (2011),
  \url{https://doi.org/10.1016/j.tcs.2011.05.025}

\bibitem{DLM19}
Didimo, W., Liotta, G., Montecchiani, F.: A survey on graph drawing beyond
  planarity. ACM Comput. Surv.  \textbf{52}(1) (Feb 2019),
  \url{https://doi.org/10.1145/3301281}

\bibitem{EHM20}
Erickson, J., van~der Hoog, I., Miltzow, T.: Smoothing the gap between {NP} and
  {ER}. In: 61st {A}nnual {S}ymposium on {F}oundations of {C}omputer
  {S}cience---{FOCS} 2020, pp. 1022--1033. IEEE Computer Soc., Los Alamitos, CA
  (2020), \url{https://doi.org/10.1109/FOCS46700.2020.00099}

\bibitem{FK20}
F{\"{o}}rster, H., Kaufmann, M.: On compact {RAC} drawings. In: Grandoni, F.,
  Herman, G., Sanders, P. (eds.) 28th Annual European Symposium on Algorithms,
  {ESA} 2020, September 7-9, 2020, Pisa, Italy (Virtual Conference). LIPIcs,
  vol.~173, pp. 53:1--53:21. Schloss Dagstuhl - Leibniz-Zentrum f{\"{u}}r
  Informatik (2020), \url{https://doi.org/10.4230/LIPIcs.ESA.2020.53}

\bibitem{HEH14}
Huang, W., Eades, P., Hong, S.: Larger crossing angles make graphs easier to
  read. J. Vis. Lang. Comput.  \textbf{25}(4),  452--465 (2014),
  \url{https://doi.org/10.1016/j.jvlc.2014.03.001}

\bibitem{M14}
Matou{\v{s}}ek, J.: Intersection graphs of segments and $\exists\mathbb{R}$.
  ArXiv e-prints  (2014), \arxiv{1406.2636} (last accessed 6/10/2020)

\bibitem{M88}
Mn\"{e}v, N.E.: The universality theorems on the classification problem of
  configuration varieties and convex polytopes varieties. In: Topology and
  geometry---{R}ohlin {S}eminar, Lecture Notes in Math., vol.~1346, pp.
  527--543. Springer, Berlin (1988)

\bibitem{RG95}
Richter-Gebert, J.: Mn\"ev's universality theorem revisited. S\'em. Lothar.
  Combin.  \textbf{34} (1995)

\bibitem{S14}
Schaefer, M.: Picking planar edges; or, drawing a graph with a planar subgraph.
  In: Graph drawing, Lecture Notes in Comput. Sci., vol.~8871, pp. 13--24.
  Springer, Heidelberg (2014),
  \url{https://doi.org/10.1007/978-3-662-45803-7_2}

\bibitem{S21}
Schaefer, M.: Complexity of geometric {$k$}-planarity for fixed {$k$}. J. Graph
  Algorithms Appl.  \textbf{25}(1),  29--41 (2021),
  \url{https://doi.org/10.7155/jgaa.00548}

\bibitem{S21b}
Schaefer, M.: On the complexity of some geometric problems with fixed
  parameters. J. Graph Algorithms Appl.  \textbf{25}(1),  195--218 (2021),
  \url{https://doi.org/10.7155/jgaa.00557}

\bibitem{SS17}
Schaefer, M., \v{S}tefankovi\v{c}, D.: Fixed points, {N}ash equilibria, and the
  existential theory of the reals. Theory Comput. Syst.  \textbf{60}(2),
  172--193 (2017), \url{https://doi.org/10.1007/s00224-015-9662-0}

\bibitem{S91}
Shor, P.W.: Stretchability of pseudolines is {NP}-hard. In: Applied geometry
  and discrete mathematics, DIMACS Ser. Discrete Math. Theoret. Comput. Sci.,
  vol.~4, pp. 531--554. Amer. Math. Soc., Providence, RI (1991)

\bibitem{WETR21}
{Wikipedia contributors}: Existential theory of the reals --- {Wikipedia}{,}
  the free encyclopedia (2021),
  \url{https://en.wikipedia.org/w/index.php?title=Existential_theory_of_the_reals}
  (last accessed 5/28/2021)

\end{thebibliography}

\appendix

\section{Proof of Theorem~\ref{thm:ETReqs}}\label{sec:ETReqs}

  We know that testing whether a given integer polynomial $f$ has a zero is \NPR-complete (see, for example~\cite[Corollary 4.2]{SS17}). Multiplying by $2$ we can ensure that all coefficients of $f$ are even numbers. We then replace every variable $x$ of $f$ with the difference of two new variables $x'-x''$. Then $f$ has a zero if and only if the new polynomial has a zero in which all variables are greater than $1$ (since any $x$ can be written
  as the difference of two numbers greater than $1$). We move negative terms to the other side of the equation to obtain two polynomials $g$, $h$ with positive, even coefficients so that $f$ has a zero if and only if $g = h$ has a solution with all variables being greater than $1$.

  We now calculate the values of $g$ and $h$ from the original variables step by step using new intermediary variables and only using equalities of the type $x_i = 2$, $x_i = x_j+x_k$ and $x_i = x_j \cdot x_k$. This includes the coefficients, which can be build from $2$ (since they are even).
  All the new intermediate variables are greater than $1$ (as products and sums of values greater than $1$). Finally, to test whether $g=h$ we add one more equation $x_i = x_j$, where $x_i$ is the new variable that computes $g$ and $x_j$ the variable that computes $h$. This completes the proof.

As an example, consider $f(x_1,x_2) = 3x_1^2 - x_2$. We first replace $f$ with $2f$, giving us $6x_1^2-2x_2$ with all even coefficients. We then replace each of the variables with a
difference of two new variables; in this example (reusing variable names) we get
$6(x_1-x_2)^2-2(x_3-x_4)$. Collecting positive terms gives us $6x_1^2+6x_2^2+2x_4 = 12x_1x_2+2x_3$,
so $g(x_1,x_2,x_3,x_4) = 6x_1^2+6x_2^2+2x_4$ and $h(x_1,x_2,x_3,x_4) = 12x_1x_2+2x_3$ in this example.
We then compute $g$ and $h$ term by term, using additional variables. E.g., let us show how to compute the term $6x_1^2$: we add equations $x_5 = x_1 \cdot x_1$, $x_6 = 2$, $x_7 = x_6 \cdot x_6$, $x_8 = x_6+x_8$, $x_9 = x_8 \cdot x_5$; then $x_9$ computes
$6x_1^2$. Similarly, we can compute all other terms into new variables, and add them up, one at a time, to get $g$ and $h$. Finally, we need one more equality, $x_i = x_j$, to compare the resulting values.

\section{Proof of Theorem~\ref{thm:remjunc}}\label{sec:remjunc}

We need gadgets to enforce the \Tvx- and \Pvx-junction restrictions. We will base these gadgets on the graph shown in Figure~\ref{fig:doublecap}.

\begin{figure}[htb]\centering
 \includegraphics{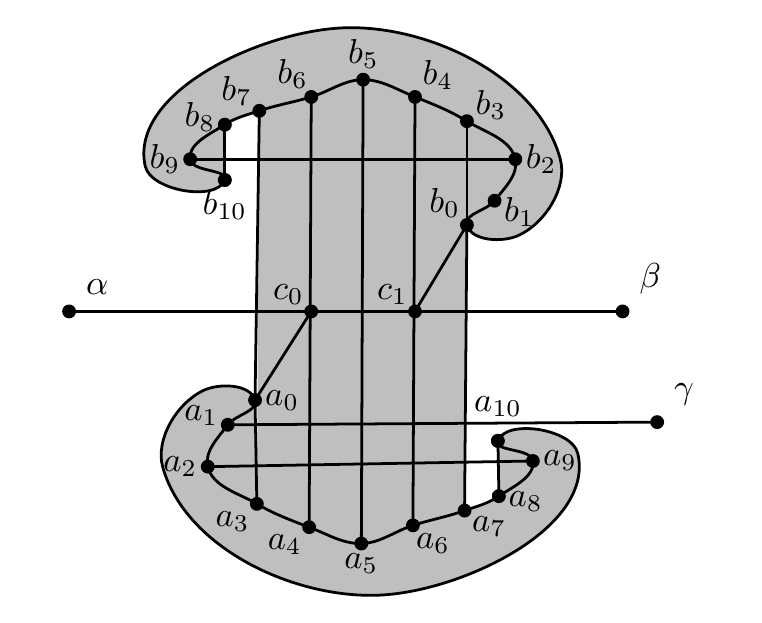}
 \caption{The double cap with attachments $c_0\alpha$, $c_1\beta$, and $a_0\gamma$. Each of the vertex sets $\{a_0,a_1,\ldots,a_{10}\}$
and $\{b_0, b_1, \ldots, b_{10}\}$ lies on the boundary of a face.
The inside of the double-cap is shaded.}\label{fig:doublecap}
\end{figure}

The double-cap simulates a line: $\alpha c_0$ and $c_1\beta$ will be collinear, and
$a_0\gamma$ will be parallel to that line; this allows us to enforce orthogonality.

Assuming the double-cap (without attachments) is drawn as shown in the illustration, we say that
$\alpha$, $\beta$, and $\gamma$ lie {\em outside the double-cap} if
they do not lie inside the region bounded by $a_0b_7b_8b_9b_{10}b_0a_7a_8a_9a_{10}a_0$.
This region is shaded in the figure.

\begin{lemma}\label{lem:doublecap}
Suppose we have a \RAC-drawing of the double cap, in which both $\{a_0,a_1,\ldots,a_{10}\}$
and $\{b_0, b_1, \ldots, b_{10}\}$ lie on the boundary of a face.
\begin{itemize}
\itemi
 Without the attachments, the drawing is isomorphic to the drawing shown in Figure~\ref{fig:doublecap} (without the attachments).
\itemii
  If $\alpha$, $\beta$, and $\gamma$ lie outside the double-cap, then the drawing of the double cap
with attachments is isomorphic to the one shown in Figure~\ref{fig:doublecap}.
\end{itemize}
\end{lemma}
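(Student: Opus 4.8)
Since ``isomorphic'' refers only to a homeomorphism of the plane, the lemma does not actually involve the coordinates of the RAC-drawing: two RAC-drawings of a graph are isomorphic exactly when their planarizations carry the same combinatorial data --- the same set of edge crossings, the same rotation system up to a global reflection, and the same outer face. So the whole statement reduces to showing that the RAC hypothesis and the two face hypotheses force this combinatorial data to be the one displayed in Figure~\ref{fig:doublecap}. My plan is to prove (i) first, working with the double cap alone, and then to bootstrap (ii) by gluing the three attachments onto the drawing produced in (i).

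\textbf{Part (i).} The first step is to pin down the crossings, and this is where I expect the real work to be. The double cap is planar and its two ``caps'' are densely connected (near-triangulated) pieces whose vertex sets already carry close to the maximum number of edges an RAC-drawable graph on that many vertices may have. I would argue that a crossing inside, or between, these pieces is always excluded by one of two obstructions: it either pushes some vertex subgraph over the $4n-10$ edge bound for RAC-drawings, or it separates one of the vertices $a_i$ (respectively $b_i$) from the face it is required to lie on. Running through the (finitely many) types of potential crossings and matching each to one of these two obstructions is the technical heart of the lemma. Once it is done, the RAC-drawing of the double cap is crossing-free, and the second step is routine: a crossing-free drawing of the double cap is a planar embedding whose rotation system is determined up to reflection (the double cap is essentially $3$-connected, and wherever it is only $2$-connected the two prescribed faces remove the remaining freedom), and among all faces of this embedding only the two shown contain all eleven vertices of $\{a_0,\dots,a_{10}\}$, respectively all eleven of $\{b_0,\dots,b_{10}\}$. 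Hence the face hypotheses identify those two faces, choosing one as outer reproduces Figure~\ref{fig:doublecap} without attachments, and (i) follows.

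\textbf{Part (ii).} Starting from the plane drawing supplied by (i), I add the edges $c_0\alpha$, $c_1\beta$, $a_0\gamma$ and track where they can run. Because a straight segment meets each dense (fan-like) subgraph of the double cap in at most one edge, each attachment edge has only a bounded number of crossings with the double cap, so combinatorially it is determined by which face of the double cap it leaves and re-enters. The hypothesis that $\alpha$, $\beta$ and $\gamma$ lie outside the shaded region, together with the fact that $c_0$, $c_1$ and $a_0$ sit on the boundary of the unshaded side, leaves exactly the routing shown in Figure~\ref{fig:doublecap}: any alternative (an attachment edge diving into the shaded region, or looping around one of the caps) would again force some $a_i$ or $b_i$ off its face, contradicting the configuration established in (i). The relative position of the three attachments is then forced as well, so the drawing with attachments is isomorphic to Figure~\ref{fig:doublecap}, giving (ii).

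\textbf{Main obstacle.} The delicate step is the crossing-exclusion argument in part (i): turning the two informal obstructions --- ``exceeds the $4n-10$ RAC edge budget on some subgraph'' and ``separates a vertex from its prescribed face'' --- into a clean, exhaustive case check over all possible crossings of the double cap. Everything after that (Whitney's theorem, reading off the faces, and routing the attachments) is bookkeeping with the planar face structure.
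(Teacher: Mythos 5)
Your plan for part (i) rests on a false premise. You propose to show that the \RAC-drawing of the double cap is crossing-free and then invoke uniqueness of planar embeddings (Whitney plus the two prescribed faces). But the target drawing in Figure~\ref{fig:doublecap} is \emph{not} crossing-free, and cannot be: since the endpoints of $a_2a_9$ alternate along the face boundary with those of $a_0a_3$ and of $a_8a_{10}$, the hypothesis that all of $\{a_0,\ldots,a_{10}\}$ lie on one face boundary \emph{forces} $a_2a_9$ to cross both of those edges. The gadget is engineered exactly so that the face hypotheses force a specific set of crossings; the \RAC\ condition then makes every edge crossing $a_2a_9$ orthogonal to it (hence all such edges mutually parallel), and the observation that two edges sharing an endpoint cannot both cross the same edge at right angles (they would be collinear and overlap) propagates through the fans to determine exactly which edge crosses $a_2a_9$ at each step; the analogous argument on the $b$-cap and on $b_0b_{10}$ then yields the collinearities ($a_0a_3$ with $a_0b_7$, $c_0a_4$ with $c_0b_6$, etc.) that pin the drawing down up to homeomorphism. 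So the combinatorial structure is determined by a cascade of \emph{forced orthogonal crossings}, not by crossing-freeness; your program of excluding every crossing cannot be completed because its conclusion is false. The $4n-10$ edge bound is also a red herring here --- the double cap is nowhere near that density, and the paper only uses that bound in the empty-face construction of Theorem~\ref{thm:emptyfaces}.

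Part (ii) is closer in spirit to the paper: the routing of $c_0\alpha$, $c_1\beta$, $a_0\gamma$ is indeed forced by which boundary edges they can and cannot cross, given that $\alpha,\beta,\gamma$ lie outside the shaded region. But the paper's actual argument again runs through forced orthogonal crossings --- for instance $a_0\gamma$ must cross, in order, the parallel family $a_0a_3$, $c_0a_4$, $a_5b_5$, $c_1a_6$, $b_0a_7$, which is precisely what makes it parallel to the spine, and $\alpha c_0$ is forced to cross $a_0b_7$ because the other three directions out of $c_0$ are blocked by overlap constraints. All of this presupposes the crossing pattern established in (i), which your version of (i) does not deliver. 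The proposal needs to be rebuilt around the forced-crossing mechanism rather than around planarity.
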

\begin{proof}
  We start with $(i)$ and ignore the attachments to $\alpha$, $\beta$, and $\gamma$.
  Since $\{a_0, a_1, \ldots, a_{10}\}$ lie on the boundary of a face, and the ends of
$a_2a_9$ alternate with the ends of both $a_0a_3$ and $a_8a_{10}$, $a_2a_9$ must cross
both of those edges (orthogonally). Then $b_0c_0$ cannot cross $a_2a_9$, since otherwise
it would overlap with $a_0a_3$. It follows that $c_0a_4$ must cross $a_2a_9$. This implies
that $c_0c_1$ cannot cross $a_2a_9$ (edges would overlap), so $c_1a_5$ must cross $a_2a_9$.
Repeating the same argument, $c_1b_0$ cannot cross $a_2a_9$, so $b_0a_8$ must cross
$a_2a_9$. Similarly, $c_0b_6$ and $c_1b_4$ cannot cross $a_2a_9$, and since $b_4b_5b_6$ lie on
the boundary of the same face, $a_5b_5$ must cross $a_2a_9$. At this point, we know
that $a_0a_3$, $c_0a_4$, $b_5a_5$, $c_1a_6$, and $b_0a_7$ are all orthogonal to $a_2a_9$
and therefore all parallel to each other.

By symmetry, we can conclude analogous facts about the upper cap. It then follows that
$a_5b_5$ must cross $c_0c_1$, so that edge is parallel to $a_2a_9$ and $b_2b_9$. And
edges $a_0a_3$ and $a_0b_7$ are collinear, as are $c_0a_4$ and $c_0b_6$, as well
as $c_1a_6$ and $c_1b_4$, and $b_0a_7$ and $b_0b_3$.

This, in turn implies that each of $a_3a_0b_7$, $a_4c_0b_6$, $a_5b_5$, and $a_6c_1b_4$
must cross edge $b_0b_{10}$ (since they connect the two caps, both of which lie on face boundaries).

In summary, without attachments,
the drawing of the double cap is (up to a homeomorphism) as shown in Figure~\ref{fig:doublecap}.

This allows us to speak of $\alpha$, $\beta$ and $\gamma$ lying outside the double-cap.
Since $\gamma$ does not lie inside the region bounded
by $a_0a_3$ and the boundary of $\{a_0,a_1,\ldots,a_{10}\}$, $a_1\gamma$ must cross
$a_0a_3$ (since the boundary is free of crossings), so it enters the rectangle formed
by $b_7a_0a_3$, $a_2a_9$, $a_7b_0b_3$, and $b_2b_9$. Since it does not lie inside,
it must continue crossing $c_0a_4$, $b_5a_5$, $c_1a_6$ and $b_0a_7$ (note that $a_0c_0$
cannot cross $a_1\gamma$, so $c_1c_2$ lies above $a_1\gamma$. Hence, $a_0\gamma$ is
as shown in the drawing.

Next, consider $\alpha$. Since it does not lie inside the rectangle formed
by $b_7a_0a_3$, $a_2a_9$, $a_7b_0b_3$, and $b_2b_9$, it must be that $\alpha c_0$ crosses one of
the four sides of that recangle, but three of the directions, $c_0b_6$, $c_0c_1$, and $c_0b_4$ are
already taken (since there can be no overlap). So $\alpha c_0$ must cross either
$a_0a_3$ or $a_0b_7$. Since it cannot cross the boundary of the $a$-cap, it must cross $a_0b_7$.

An analogous argument for $\beta$ show that $c_1\beta$ crosses $b_0a_7$. Hence, the
drawing with attachments is as shown in the figure.
\end{proof}

By combining two double-caps we can now build a \Tvx-gadget, as shown in Figure~\ref{fig:Tjunctiongadget}.
There are four attachments: $v\beta$, $c_0\alpha$, $c_0'\gamma$, and $v\delta$. (We label each double cap as before, and use $'$ to
refer to vertices in the right double cap.

We define the {\em outside region} of a \Tvx-gadget to be everything that is outside
both the double-caps (as defined earlier) as well as outside the region bounded by
the boundary $c_0a_0a_1a'_1c'_0c_0$. The inside region is shaded in the drawing.

\begin{figure}[htb]\centering
 \includegraphics[width=5in]{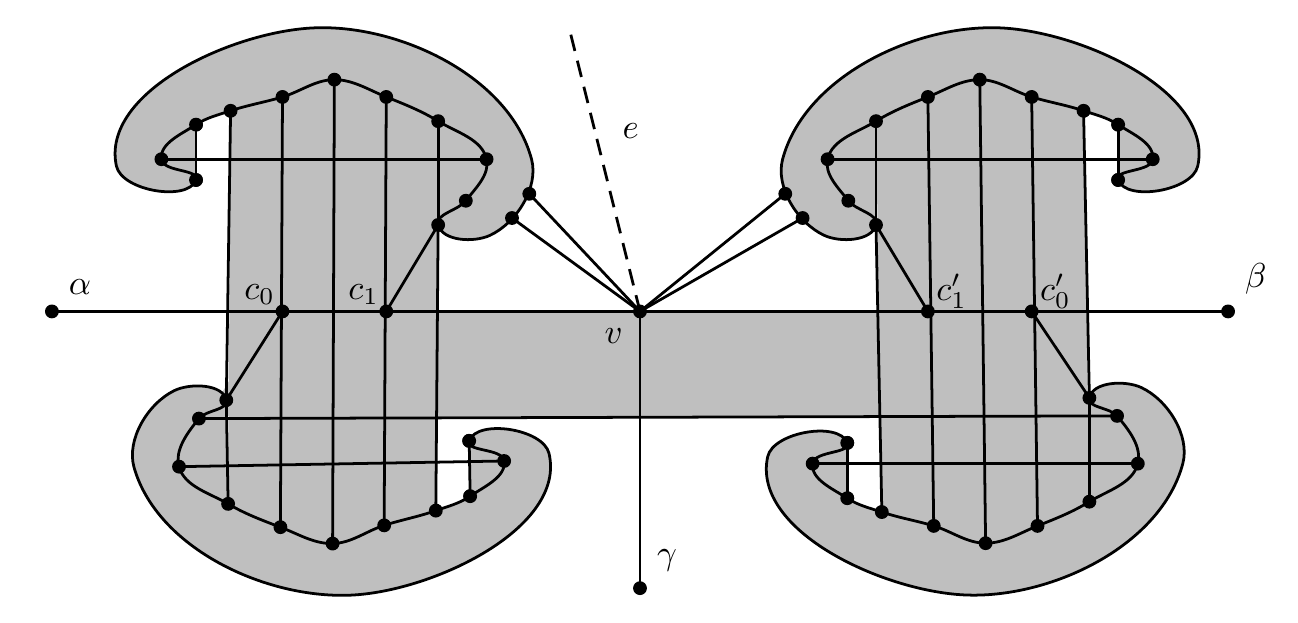}
 \caption{Simulating a \Tvx-junction with two double-caps. The inside
of the region is shaded.}\label{fig:Tjunctiongadget}
\end{figure}

\begin{lemma}\label{lem:Tjunctiongadget}
 In a \RAC-drawing of the \Tvx-junction gadget, the drawing, without attachments, is
as shown in Figure~\ref{fig:Tjunctiongadget} (without the attachments). If $\alpha$,
$\beta$ and $\gamma$ lie outside the \Tvx-junction, then the drawing
is as shown in the figure.
\end{lemma}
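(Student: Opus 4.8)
The plan is to bootstrap everything from Lemma~\ref{lem:doublecap}. The \Tvx-junction gadget is assembled from two double-caps — a left one and a right one (the latter's vertices marked with primes) — meeting at the vertex $v$, so the first step is to apply Lemma~\ref{lem:doublecap} to each of them in turn. Its two hypotheses come for free: the designated vertex sets $\{a_0,\ldots,a_{10}\}$ and $\{b_0,\ldots,b_{10}\}$ of each cap lie on face boundaries by construction of the gadget, and the restriction of a \RAC-drawing of the whole gadget to either cap is again a \RAC-drawing. Part $(i)$ of Lemma~\ref{lem:doublecap} then freezes the combinatorial structure of each cap up to isomorphism; in particular, for each cap it identifies the segment $c_0c_1$ (the ``line'') it simulates, the cap-edges orthogonal to that line, and the cap-edges collinear with it.

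The heart of the argument — and what I expect to be the main obstacle — is to pin down how the two now-rigid caps fit together around $v$. A priori the only remaining freedom is a rotation, and possibly a reflection, of one cap relative to the other. I would cut this down using (a) planarity of the drawing, (b) the no-overlap rule for \RAC-edges meeting at $v$, (c) consistency in the plane of the two caps' boundary faces, and (d) the orthogonality-forcing (dashed) edges of the construction in Figure~\ref{fig:Tjunctiongadget}: exactly as in the proof of Lemma~\ref{lem:doublecap}, an edge forced by a right-angle crossing to meet a cap-edge parallel to that cap's line thereby becomes perpendicular to the cap's line, and this is the mechanism that forces the left cap's line and the right cap's line to be perpendicular at $v$, positioned as drawn. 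Once all of these constraints are imposed I expect the only surviving ambiguity to be the single orientation-reversing symmetry, which is harmless since ``isomorphic'' permits an orientation-reversing homeomorphism (this is also what corresponds to the ``or the reverse'' clause in the definition of a \Tvx-junction). That establishes the first assertion.

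For the second assertion I would assume $\alpha$, $\beta$, and $\gamma$ lie outside the \Tvx-junction. The key observation is that the gadget's outside region is contained in the outside region of each of its two double-caps — this should follow directly by comparing the two definitions of ``outside'' — so the relevant attachment endpoints are outside whichever cap their edge enters, and I may invoke part $(ii)$ of Lemma~\ref{lem:doublecap} for each cap. That routes the edges entering the caps ($c_0\alpha$ into the left cap, $c_0'\gamma$ into the right cap, and any of $v\beta$, $v\delta$ that the wiring sends into a cap) as in the figure. Any attachment at $v$ not handled this way — in particular an additional-edge attachment $v\delta$ — is then placed by the familiar overlap-plus-``does not lie inside'' routing argument from the end of the proof of Lemma~\ref{lem:doublecap}, since at $v$ the directions along the two cap lines and their collinear continuations are already occupied, so such an edge must leave $v$ through the unique free gap. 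Hence the whole drawing is isomorphic to Figure~\ref{fig:Tjunctiongadget}. The part I would be most careful about is, again, step (d) and the gluing at $v$: one must check that no unintended relative rotation of the two caps survives the crossing constraints, and that the two boundary faces carrying the $a$- and $b$-vertices can be fitted together in the plane in only the one way (up to reflection) depicted.
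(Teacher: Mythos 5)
Your overall strategy coincides with the paper's: restrict the \RAC-drawing to each of the two double-caps, invoke Lemma~\ref{lem:doublecap} to rigidify each cap, and then argue about how the two rigid caps and the vertex $v$ fit together. But the part you repeatedly defer --- ``the gluing at $v$'' --- is exactly where the paper's proof does its real work, and your proposal never supplies the argument. Concretely, the paper first shows that $v$ must lie \emph{outside} both double-caps: $v$ is joined by two edges to the boundary cycle of a $b$-cap, and if $v$ were inside the cap these two incident edges would have to orthogonally cross two \emph{distinct} boundary edges (they cannot both cross the same edge at right angles without overlapping), which is blocked by the diagonal edges $c_0a_0$ and $c_1b_1$. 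It then shows the two caps lie outside each other (else $vc_1$ and $vc'_1$ would overlap), which is what kills the reflection ambiguity of the right cap. None of your listed ingredients (a)--(d) is wrong, but listing them is not the same as running this argument, and you explicitly flag that you have not checked that ``no unintended relative rotation survives.''

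There is also a geometric misreading of what the gadget does. You assert that the mechanism ``forces the left cap's line and the right cap's line to be perpendicular at $v$.'' In fact the two caps' lines are made \emph{collinear}: the dedicated connector edge $a_0a'_0$ crosses $b_0a_7$ and $b'_0a'_7$ orthogonally, making it parallel to both caps' lines and hence making those lines parallel (and, sharing $v$, collinear); they form the $e_1e_3$ bar of the \Tvx-junction. The perpendicular stem $v\beta$ then gets its orthogonality from crossing $a_0a'_0$ at a right angle. Your plan never identifies $a_0a'_0$ or any specific edge playing this role, so the orthogonality of $e_2$ to $e_1e_3$ --- the defining property of the \Tvx-junction --- is not actually derived. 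Finally, your claim that the additional edge $v\delta$ ``must leave $v$ through the unique free gap'' overstates what is (or should be) proved: the paper only needs, and only gets, that $\delta$ lies in the rotation between $e_1$ and $e_3$ opposite $e_2$, at an arbitrary angle. The second assertion's use of Lemma~\ref{lem:doublecap}$(ii)$ via the containment of the gadget's outside region in each cap's outside region is fine and matches the paper.
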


Edge $e$ could leave $v$ in other directions as well, the drawing shows the intended
direction. We note that the caps can be made arbitrarily small, and close to $v$, so that $e$
can be realized anywhere in the rotation at $v$ between $\alpha$ and $\beta$.

\begin{proof}
By Lemma~\ref{lem:doublecap}, we can assume that the left double cap, without attachments, is drawn as
shown in the figure. Similarly, the right double cap without attachments is either drawn as shown, or
reversed (we will exclude that possibility later).

We attached $v$ to two new vertices on the boundary of one of the left $b$-cap. Then
$v$ must lie outside the left double-cap: to lie inside, it the edges of attachment would
have to cross $b_8b_{10}$, $a_8a_{10}$, $b_0a_7$ or $a_7b_7$. Two incident edges cannot
both cross the same edge orthogonally, so the two edge incident to $v$ must cross two
different edges. This is only possible if $v$ lies inside the region bounded by
the double-cap and the two attachments cross $a_0b_7$ and $b_0a_7$. This is not
possible, because of the two diagonal edges $c_0a_0$ and $c_1b_1$. We conclude
that $v$ lies outside of the left double-cap, and, with the same argument, outside the
other double-cap as well. Moreover, the two double caps are outside each other, since otherwise
$vc_1$ would overlap with $vc'_1$. At this point, we know that the right double cap
is orient as shown in the figure, and not reversed.

Consider the edge $a_0a'_0$. Since $a'_0$ is outside the left double cap, $a_0a'_0$ crosses
$b_0a_7$ at a right angle, and, since $a_0$ is outside the right double cap, $a_0a'_0$
crosses $b'_0a'_7$ at a right angle. Hence, if $\alpha$, $\beta$, and $\gamma$ lie
outside the \Tvx-junction, $\alpha v$ and $v\beta$ lie on a line, and $v\beta$ is orthogonal
to that line (since it crosses $a_0a'_0$).
\end{proof}

Finally, as we mentioned earlier, we can replace each \Pvx-junction with four \Tvx-junctions, see Figure~\ref{fig:PvxtoTvx}.

\begin{figure}[htb]\centering
 \includegraphics[height=1.5in]{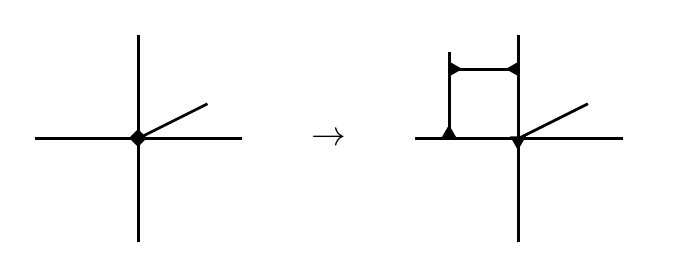}
 \caption{How to replace a \Pvx-junction with four \Tvx-junctions.}\label{fig:PvxtoTvx}
\end{figure}

\begin{lemma}\label{lem:PvxtoTvx}
 Given a graph $G$ with \Pvx- and \Tvx-junctions, we can build a graph $G'$ with \Tvx-junctions only so that $G$ has a \RAC-drawing if and only if $G'$ has, and if $G$ has a crossing-free \RAC-drawing, then so does $G'$. Given a drawing $D$ isomorphic to a \RAC-drawing of $G$ we can efficiently find a drawing $D'$ isomorphic to a \RAC-drawing of $G'$.
\end{lemma}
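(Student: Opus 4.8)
The plan is a purely local substitution, the one depicted in Figure~\ref{fig:PvxtoTvx}: at every \Pvx-junction $v$ of $G$, with special edges $e_1,e_2,e_3,e_4$ in the prescribed cyclic order, we reinterpret $v$ itself as a \Tvx-junction carrying the special edges $e_1,e_2,e_3$. Then $e_4$ survives merely as an ordinary edge at $v$, which is legitimate because a \Tvx-junction permits additional edges in the region between $e_1$ and $e_3$ on the side opposite $e_2$ --- and that is exactly where $e_4$ must sit. This \Tvx-junction already forces $e_1 \parallel e_3$ (as opposite rays), $e_2 \perp e_1$, and the rotation $e_1e_2e_3$ or its reverse; what is missing is that $e_4$ be orthogonal to $e_1$ (equivalently, that it be the ray opposite $e_2$). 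To enforce this we append a small gadget assembled from four further \Tvx-junctions: two of them subdivide $e_4$ and $e_1$ just next to $v$ and emit perpendicular spikes, and the remaining two make these two spikes collinear. Since $e_4$ and $e_1$ are then both perpendicular to one common line, they are forced parallel; combined with $e_4$ being confined to the half opposite $e_2$, this makes $e_4$ the ray opposite $e_2$. Carrying out this replacement at every \Pvx-junction, and leaving every \Tvx-junction --- old and new --- as a \Tvx-junction, produces $G'$ in polynomial time, with \Tvx-junctions only.

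For soundness, suppose $G'$ has a \RAC-drawing. Around each copy of the gadget one argues exactly as in the proofs of Lemmas~\ref{lem:doublecap} and~\ref{lem:Tjunctiongadget}, using only the two recurring principles: two edges sharing an endpoint cannot both cross a third edge at a right angle without overlapping, and the rotation together with the two right angles at a \Tvx-junction forces its two straight special edges to be collinear and the third orthogonal to them. Propagating these facts yields, at $v$, that $e_1,e_2,e_3$ behave as required, and, from the four-junction sub-gadget, that the two spikes are collinear, whence $e_1 \perp e_4$; since $e_4$ lies opposite $e_2$, the edges $e_1,\dots,e_4$ now satisfy the \Pvx-junction constraints. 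Contracting each gadget back to a single vertex therefore turns the \RAC-drawing of $G'$ into a \RAC-drawing of $G$. Conversely, given a drawing $D$ isomorphic to a \RAC-drawing of $G$, the \Pvx-junction constraint says that $e_1,\dots,e_4$ leave $v$ along four rays at consecutive right angles; we subdivide $e_4$ and $e_1$ immediately next to $v$ and draw the small sub-gadget inside a tiny disk around $v$ lying in the quadrant between $e_4$ and $e_1$. Since the \Pvx-junction is crossing-free, that disk meets nothing else, so no crossing is introduced, and each new \Tvx-junction constraint is visibly met by this standard drawing. The result is a drawing $D'$ isomorphic to a \RAC-drawing of $G'$, computable from $D$ in polynomial time, and it is crossing-free whenever the given drawing of $G$ is; in particular $G$ has a (crossing-free) \RAC-drawing if and only if $G'$ does.

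The step I expect to be the real work is the soundness rigidity claim: verifying that the sub-gadget's \Tvx-junction constraints, together with the no-overlap principle, genuinely pin the two spikes down to be collinear and correctly oriented, ruling out degenerate placements (coinciding vertices, spuriously collinear triples) and mirror-reversed configurations. This is the same sort of case analysis carried out for the double cap in Lemma~\ref{lem:doublecap}, and the particular layout in Figure~\ref{fig:PvxtoTvx} --- which quadrants the spikes occupy and how the auxiliary junctions are chained --- is arranged precisely so that this analysis goes through.
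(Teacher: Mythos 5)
Your overall strategy matches the paper's: a purely local replacement of each \Pvx-junction by a constant-size cluster of \Tvx-junctions, drawn inside a tiny disk around $v$ so that crossing-freeness and the isomorphism type of the drawing are preserved. (The paper's own proof is little more than a pointer to Figure~\ref{fig:PvxtoTvx}, so supplying an explicit gadget is the right instinct.) However, your gadget enforces the wrong relation. You correctly reduce the problem to forcing the leftover edge $e_4$ to be the ray opposite $e_2$, which, since $e_2\perp e_1$, means forcing $e_4\perp e_1$. But your sub-gadget emits spikes perpendicular to $e_4$ and to $e_1$ and then makes those spikes collinear; as you yourself note, this forces $e_4\parallel e_1$. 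That is not merely the wrong constraint --- it is unsatisfiable: $e_4$ and $e_1$ are rays emanating from the same vertex $v$, so $e_4\parallel e_1$ forces $e_4$ to lie along the line through $e_1$ and $e_3$ and hence to overlap one of them, contradicting the requirement that $e_4$ sit in the open half-plane opposite $e_2$. Consequently your $G'$ admits no \RAC-drawing whenever $G$ contains a \Pvx-junction, and the forward implication of the lemma fails.

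The repair is small but necessary: the two subdivision junctions emitting the spikes should sit on $e_2$ and $e_4$ (the pair you actually want collinear through $v$), not on $e_1$ and $e_4$. Then collinearity of the two spikes yields $e_4\parallel e_2$, and since $e_4$ is confined to the open half-plane opposite $e_2$, the only parallel ray from $v$ available is the one opposite $e_2$, which is exactly the \Pvx constraint. With that change your completeness argument (draw the sub-gadget in a small disk in the quadrant between $e_4$ and $e_1$, meeting nothing else) and your soundness sketch (no-overlap plus the \Tvx rigidity, as in Lemmas~\ref{lem:doublecap} and~\ref{lem:Tjunctiongadget}) go through in the intended way; you would still owe the case analysis ruling out degenerate and mirror-reversed placements of the spike-joining junctions, which you correctly flag as the real work.
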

\begin{proof}
  We replace each \Pvx-junction with four \Tvx-junctions as shown in Figure~\ref{fig:PvxtoTvx}. This does not affect \RAC-drawability, and constructs $D'$ efficiently from $D$. If the original drawing is crossing free, then, the \Tvx-junctions can be placed arbitrarily close to the vertex of the original junction so that
  the new \RAC-drawing remains crossing-free.
\end{proof}

Because of the lemma, we only need to deal with \Tvx-junctions. The remaining issue is that we need to keep multiple \Tvx-junction gadgets from overlapping or interfering with each other. Here is where the planarity of the graph will play a role.

Suppose we are given a graph $G$ with \Tvx-junctions (which we can assume by Lemma~\ref{lem:PvxtoTvx}), and a crossing-free drawing $D$ of $G$. We know that a \RAC-drawing of $G$, if it exists, will be isomorphic to $D$. 
We start  by replacing each \Tvx-junction with a \Tvx-junction gadget. 

Consider an edge $uv$ in $G$. If it connects
two non-junction vertices, we do nothing. Suppose $uv$ connects \Tvx-junction $v$ to a vertex $u$ in $G$.
If $u$ is $\alpha$ or $\beta$ in the \Tvx-junction gadget, we connect $u$ with two edges to one of the neighboring two caps belonging to the \Tvx-junction gadget belonging to $v$. If $u$ is $\gamma$ in the \Tvx-junction gadget, we connect it with two edges to the lower caps of both double-caps belonging to $v$.
See the vertex $u$ on the left of Figure~\ref{fig:joinjunctions}.

\begin{figure}[htb]\centering
 \includegraphics[width=5in]{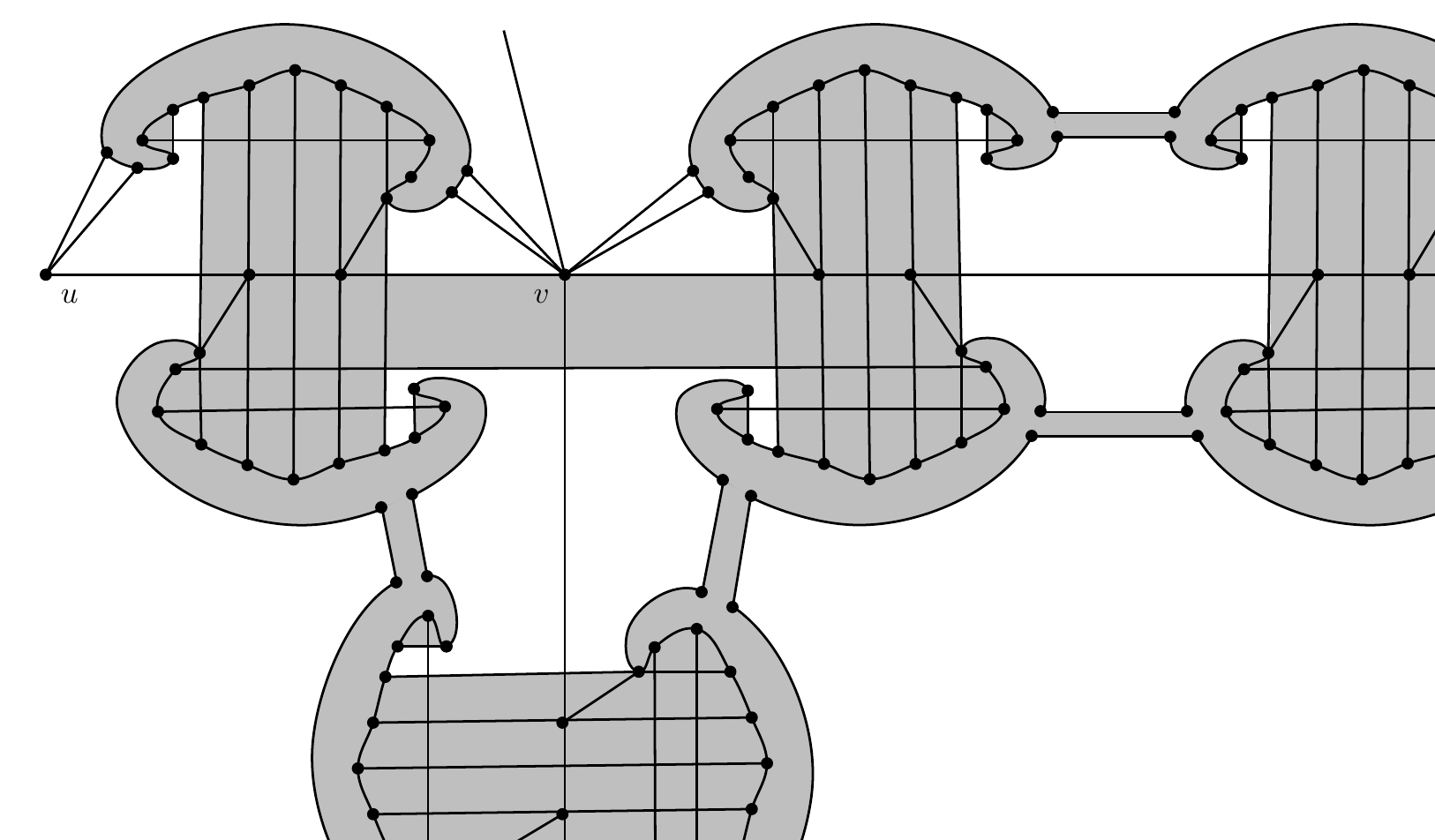}
 \caption{Connecting a \Tvx-junction to neighboring \Tvx-junctions and vertices.}\label{fig:joinjunctions}
\end{figure}

Similarly, if an edge $vw$ connects two \Tvx-junctions in $G$, we connect the boundaries of all caps (of \Tvx-junction gadgets belonging to $u$ and $v$) that lie on the same face and merge them. See Figure~\ref{fig:joinjunctions} which shows the caps of a \Tvx-junction gadget merged along a $\beta$- and a $\gamma$-edge. This forces the $c_1$ vertex belonging to $w$ to lie outside the \Tvx-junction gadget belonging to $v$ and vice versa. In this fashion, we ensure that for each \Tvx-junction gadget, its $\alpha$-, $\beta$-, and $\gamma$-vertices lie outside the gadget, which, by Lemma~\ref{lem:Tjunctiongadget} implies that the gadget correctly represents a \Tvx-junction.

From the plane drawing $D$ of $G$ we have constructed a graph $G'$ (without \Tvx- or \Pvx-junctions) and a plane drawing $D'$ of $G'$. Let $(V_i)_{i\in [k]}$ be the collection of boundaries of (merged) caps.

If $D$ is isomorphic to a \RAC-drawing of $G$, then we can draw the junction gadgets as intended, and $D'$ is isomorphic to a \RAC-drawing of $G'$. Inspecting the gadgets, we find that no edge in any gadget is involved in more than $10$ crossings (this number is achieved for edges of type $a_0a'_0$ connecting two double-caps in a \Tvx-junction gadget). Hence, we can assume that the \RAC-drawing is $10$-planar. This proves property $(i)$ of the theorem. 

For $(ii)$ suppose that $G'$ has a drawing in which the vertices of each $V_i$ lie on a face. This is sufficient, as we argued, for all the junction gadgets to work correctly, and we can conclude that $G$ has a \RAC-drawing. This completes the proof of the theorem.
\end{document}